\documentclass[a4paper,12pt,reqno]{amsart}
\usepackage{geometry}
\usepackage[utf8]{inputenc}
\usepackage{upgreek}
\usepackage{amsmath, mathtools, esint, dsfont, amssymb, enumitem}
\usepackage[natbib=true, style=numeric, sorting=nyvt]{biblatex}
\usepackage[colorlinks, allcolors=black]{hyperref}
\newtheorem{theorem}{Theorem}
\newtheorem*{xremark}{Remark}

\newtheorem{proposition}{Proposition}
\newtheorem{lemma}{Lemma}

\newcommand{\N}{\mathbb{N}}
\newcommand{\R}{\mathbb{R}}
\newcommand{\E}{\mathbb{E}}
\newcommand{\p}{\mathbb{P}}

\newcommand{\C}{\mathbb{C}}

\begin{document}
\title{Character sums over smooth numbers}
\author{Seth Hardy}
\address{Mathematics Institute, Zeeman Building, University of Warwick, Coventry CV4 7AL, England}
\email{Seth.Hardy@warwick.ac.uk}
\author{Max Wenqiang Xu} 
\address{Yau Mathematical Sciences Center, Tsinghua University, Beijing, China}
\email{maxxu1729@gmail.com}
\date{\today} 
\begin{abstract}
Let $\Psi (x,y)$ denote the count of $y$-smooth numbers below $x$ and $P(n)$ denote the largest prime factor of $n$. We show that
\[
\frac{1}{\varphi(q)} \sum_{\chi \bmod q} \Bigl| \sum_{\substack{n \leq x \\ P(n) \leq y}} \chi(n) \Bigr| = o \Bigl( \sqrt{\Psi(x,y)} \Bigr),
\]
whenever $(\log x)^6 \leq y \leq x^{\frac{1}{32 \log \log x}}$ and $q \geq x^{1 + \varepsilon}$ for some small quantifiable $\varepsilon > 0$. The saving is substantial when $\varepsilon$ is fixed away from zero, and we prove similar results for continuous characters and completely multiplicative twists of these sums.
\end{abstract}
\maketitle

\section{Introduction}

\subsection{Main result}
Thanks to developments in the theory of \emph{random multiplicative functions}, our understanding of the typical behaviour of partial sums of Dirichlet characters and continuous characters has advanced substantially in recent years. The Steinhaus random multiplicative function $f$ is defined by letting $(f(p))_{p \text{ prime}}$ be independent random variables uniformly distributed on the complex unit circle, and setting
\[
f(n) \coloneqq \prod_{i=1}^r f(p_i)^{\alpha_i},
\]
whenever $n = p_1^{\alpha_1} \dots p_r^{\alpha_r}$. A key property of the Steinhaus random multiplicative function is that it satisfies the orthogonality relation, $\E \bigl[ f(n) \overline{f(m)} \bigr] = \mathbf{1}_{n=m}$, which mimics orthogonality of characters. For example, it is quite straightforward to prove~\cite[Lemma 2.1]{GranvilleSound} that for $x^k \leq q$, we have
\[
\frac{1}{\varphi(q)} \sum_{\chi \bmod q} \Bigl| \sum_{n \leq x} \chi (n) \Bigr|^{2k} = \E \Bigl| \sum_{\substack{n \leq x \\ (n,q)=1}} f(n) \Bigr|^{2k},
\]
a result that was leveraged in the seminal work of~\citet{GranvilleSound} to investigate large values of partial sums of Dirichlet characters.

Relevant to our work, in a recent preprint of~\citet[Theorem~1]{HarperTypical} it was shown that, for $q$ prime, we have
\[
\frac{1}{q-1} \sum_{\chi \bmod q} \Bigl| \sum_{n \leq x} \chi (n) \Bigr| = o (\sqrt{x} ),
\]
so long as $x, q/x \rightarrow \infty$, which improves the classical square-root bound that can be obtained from applying the Cauchy--Schwarz inequality and orthogonality. The result is quantitative, saving at most a factor of $(\log \log x)^{1/4}$, which is expected to be optimal in light of Harper's~\cite[Theorem~1]{HarperLM} previous result that
\[
\E \Bigl| \sum_{n \leq x} f(n) \Bigr| \asymp \frac{\sqrt{x}}{(\log \log x)^{1/4}} ,
\]
for $f$ a Steinhaus random multiplicative function. In fact, the proof of the character sum result works by explicitly connecting the problem to (a certain stage in the proof of) the random multiplicative case. 

In this paper, we approach the problem of bounding the first absolute moment of Dirichlet characters and continuous characters over $y$-smooth numbers, that is, numbers whose prime factors are all less than or equal to $y$; see~\cite{GranvilleSmooths} for a survey on smooth numbers. Let us briefly discuss what is known in the random multiplicative setting. Letting $P(n)$ denote the largest prime factor of $n$, the authors have previously proved~\cite[Theorem~1.1]{HardyXu} that
\[
\E \Bigl| \sum_{\substack{n \leq x \\ P(n) \leq y}} f(n) \Bigr| = o \Bigl( \sqrt{\Psi(x,y)} \Bigr),
\]
uniformly $2 \leq y \leq x$, where $\Psi(x,y) \coloneqq \# \{ n \leq x : P(n) \leq y\}$ is the count of $y$-smooth numbers below $x$, thus beating the square-root bound uniformly on the entire range of smoothness parameter. Moreover, in the range $(\log x)^{1 + \varepsilon} \leq y \leq x^{1/(\log \log x)^{1 + \varepsilon}}$, it was shown~\cite[Theorem~1.2]{HardyXu} that
\[
\E \Bigl| \sum_{\substack{n \leq x \\ P(n) \leq y}} f(n) \Bigr| \leq \sqrt{\Psi(x,y) \exp \bigl(- u \log 2 (1 + o_{\varepsilon} (1)) \bigr)},
\]
where $u = \frac{\log x}{\log y}$. The saving is large because the expectation is dominated by rare events, see~\cite[Section~1.3]{HardyXu}. In the present paper, we adapt this argument to the deterministic setting of Dirichlet characters, continuous characters, and completely multiplicative twists of these. Recall that for any given $2 \leq y \leq x$, the saddle point $\alpha(x,y)$ is defined as the unique solution to
\begin{equation}\label{equ: saddle defn}
\sum_{p \leq y} \frac{\log p}{p^{\alpha(x,y)} - 1} = \log x,
\end{equation}
see~\cite[Equation~(2.2)]{HildTen86}. Importantly, in the range $(\log x)^2 \leq y \leq x^{1/3}$, say, it can be shown~\cite[Lemmas~2 and~3]{HildTen86} that
\begin{equation}\label{equ: saddle point est}
\alpha (x,y) = 1 - \frac{\log (u \log u)}{\log y} + O \biggl( \frac{1}{\log y} \biggr).
\end{equation}
For the family of Dirichlet characters, our main result is the following:
\begin{theorem}\label{t: main}
Suppose that $2 \leq x \leq q$ and $(\log x)^6 \leq y \leq x^{\frac{1}{32 \log \log x}}$. Let $\alpha (x,y)$ be the corresponding saddle point and let $u = \frac{\log x}{\log y}$. Let $\sum_{\chi \bmod q}$ denote the sum over all Dirichlet characters $\chi \bmod q$, and define
\[
S(x,y,q) \coloneqq (\log x)^{3/2} (\log y)^{1/2} \biggl( y^{1/2 - \alpha(x,y)} +  \exp \biggl\{ - \frac{u}{4} \biggl( \frac{\log \min\{ q, x^{3/2} \}}{\log x} - 1 \biggr)^2  \biggr\} \biggr).
\]
Then we have
\[
\frac{1}{\varphi(q)} \sum_{\chi \bmod q} \Bigl| \sum_{\substack{n \leq x \\ P(n) \leq y}} \chi(n) \Bigr| \ll \sqrt{\Psi(x,y)S(x,y,q)},
\]
where the implied constant is uniform in all variables. For any function $\eta(x) \rightarrow \infty$, the saving factor $S(x,y,q)$ is $o_{\eta} (1)$ uniformly on the range $(\log x)^{6} \leq y \leq x^{\frac{1}{32 \log \log x}}$ and $ \frac{\log q}{\log x} \geq 1 + \sqrt{\frac{6 \log \log x + 2 \log \log y + \eta (x)}{u}}$.
\end{theorem}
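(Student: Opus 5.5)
The plan is to adapt the argument of~\cite[Theorem~1.2]{HardyXu} by replacing the expectation over the Steinhaus function with the average over $\chi \bmod q$, using orthogonality only at places where the sums involved are short compared with $q$. The first step is a decomposition by the largest prime factor, or alternatively a Rankin-type weighting. We write the smooth sum as $\sum_{n \le x, P(n)\le y} \chi(n)$. Following HardyXu, we apply Cauchy--Schwarz with a weight adapted to the saddle point. Concretely, for a parameter $\sigma$ we bound the first moment by
\[
\Bigl(\frac{1}{\varphi(q)}\sum_\chi |F_\chi|^2 \, W_\chi^{-1}\Bigr)^{1/2}\Bigl(\frac{1}{\varphi(q)}\sum_\chi W_\chi\Bigr)^{1/2},
\]
where $W_\chi$ is a truncated Euler product weight, such as $|\prod_{p\le y}(1-\chi(p)p^{-\sigma})^{-1}|^{2}$ restricted to a short range, or a tilting factor in the spirit of the rare-event analysis. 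We then expand both factors. The second factor is a mean value of an Euler product. The first is, after opening $W^{-1}$ as a Dirichlet series, a twisted second moment.

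The key second step is to evaluate these mean values through orthogonality: $\frac{1}{\varphi(q)}\sum_\chi \chi(m)\overline{\chi(n)} = \mathbf 1_{m\equiv n \bmod q,\ (mn,q)=1}$. When all integers involved are $\le q$, this gives exactly the random-multiplicative computation, and the bound matches the Steinhaus bound. This Steinhaus part produces the term $y^{1/2-\alpha}$, up to the factor $(\log x)^{3/2}(\log y)^{1/2}$ that comes from handling the short Euler products and the smooth-number estimates near $\alpha(x,y)$ via~\eqref{equ: saddle point est}. The off-diagonal contributions occur when $m\equiv n \bmod q$ with $m \ne n$, which requires $\max(m,n) > q$. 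We truncate the weights so that the relevant products $mn'$ are at most $x^{c}$. We then control these congruence solutions by the count of pairs of smooth numbers of size around $q$. By Rankin's trick with exponent $\alpha$, this count costs roughly $\Psi(x,y)\exp\{-\tfrac{u}{4}(\log q/\log x - 1)^2\}$. This is the Gaussian-type tail of the distribution of $\log n$ among smooth numbers, $\log n$ having variance $\asymp u\log^2 y$. It explains the second term, with $q$ capped at $x^{3/2}$ because beyond that the truncation already makes the off-diagonal terms vanish.

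The last step is the ``saving is $o(1)$'' claim, which is a direct computation. First, $y^{1/2-\alpha} = \exp(-(\tfrac12+o(1))\log y + \log(u\log u) + O(1))$ by~\eqref{equ: saddle point est}. Since $y\ge (\log x)^6$, this gives $y^{-1/2}u\log u \le (\log x)^{-3+o(1)}$, which is killed against $(\log x)^{3/2}(\log y)^{1/2}$, although only barely, and that is why the exponent $6$ appears. Second, the hypothesis on $\log q/\log x$ gives $\tfrac u4(\cdot)^2 \ge \tfrac14(6\log\log x+2\log\log y+\eta)$, whose exponential cancels the prefactor with an extra $e^{-\eta/4}$. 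The upper bound $y\le x^{1/(32\log\log x)}$ ensures $u \ge 32 \log\log x$, so that the squared quantity, when capped at $x^{3/2}$, still wins.

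I expect the main obstacle to be the off-diagonal analysis in the second step. It requires choosing the weight truncation so that the diagonal still reproduces the HardyXu saving while the congruence solutions $m\equiv n \pmod q$ among smooth numbers up to $x^{3/2}$ are bounded sharply enough to yield the $\exp\{-\frac u4(\cdot)^2\}$ tail. My first attempt would be to bound them trivially by $\sum_{m\le X}\#\{n\le X: n\equiv m\}\le \sum_m (1+X/q)$ restricted to $m > q$, and then apply Rankin.
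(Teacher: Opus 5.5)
\textbf{Verdict: there is a genuine gap.} Your Steps 1--2 contain no mechanism that produces a saving. The difficulty is also not where you put it (off-diagonal congruences).

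\emph{Why the weighted Cauchy--Schwarz does not gain.} The inequality $\E_{\chi(q)}|F_\chi|\le(\E_{\chi(q)}|F_\chi|^2W_\chi^{-1})^{1/2}(\E_{\chi(q)}W_\chi)^{1/2}$ only helps if $W_\chi$ is comparable to $|F_\chi|$.
\begin{itemize}
\item A weight like $|L(\sigma,\chi,y)|^2$ behaves like $|F_\chi|^2$, and with $W_\chi\propto|F_\chi|^2$ the inequality is exactly the trivial bound.
\item The natural choice $W_\chi\approx|L(\sigma,\chi,y)|$ makes $\E_{\chi(q)}W_\chi$ the first absolute moment of an Euler product. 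This is precisely the quantity the paper says cannot be bounded directly.
\end{itemize}
(This is also not what HardyXu do: their argument is Perron plus the triangle inequality plus $\E|L|\ll\zeta(\alpha,y)^{1/4}$, which uses independence of $f(p)$.)

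\emph{Why the off-diagonal count does not rescue it.} The problem is not a few solutions of $m\equiv n \bmod q$. $L(\sigma,\chi,y)$ and $|L(\sigma,\chi,y)|^{\pm1}$ are supported on all $y$-smooth integers, so orthogonality mod $q$ says nothing about them. Your ``trivial'' count amounts to $\E_{\chi(q)}|\sum_{n\le X}c_n\chi(n)|^2\le(1+X/q)\sum_n|c_n|^2$, which is useless when $X$ is effectively infinite. If you instead truncate $W_\chi$ to length $\le q$, it stops being an Euler product. You then lose the factorisation $\E\prod_p=\prod_p\E$ that you want the diagonal to reproduce. ``Restricted to a short range'' is where all the work lies, and the proposal does not say how to do it.

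Your account of where the two terms of $S$ come from is reverse-engineered from the answer. No Steinhaus-type diagonal produces $y^{1/2-\alpha}$; the random multiplicative saving is $e^{-u\log 2}$.

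\emph{The missing idea is never to form a long object.}
\begin{itemize}
\item The paper replaces $\log L$ by the prime sum $P(\alpha/2+it,\chi,y)$, of length $\le y$.
\item It replaces the generating function by the truncated exponential $\sum_{k\le\mathcal K}P^k/k!=\sum_{n\le y^{\mathcal K}}a(n)\chi(n)n^{-\alpha/2-it}$. This has length $y^{\mathcal K}\le\min\{q,x^{3/2}\}$, with $0\le a(n)\le1$, and $a(n)=1$ unless some $p^j\mid n$ has $p>y^{1/j}$ or $\omega(n)>\mathcal K$.
\item Orthogonality then computes exactly the $L^2$ distance to $\sum_{n\le y^{\mathcal K},\,P(n)\le y}\chi(n)n^{-\alpha/2-it}$. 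The prime powers give $y^{1/2-\alpha}$. Rankin's trick with weight $e^{c\omega(n)}$ gives $\exp\{-\frac u4\delta^2\}$ with $\delta=\mathcal K/u-1$.
\item That second term is a Gaussian tail for $\omega(n)$, not for $\log n$. The cap $x^{3/2}$ is there only so that $\delta\le 1/2$ in an elementary inequality, not because off-diagonal terms vanish.
\item The saving itself comes from Lemma~\ref{l: large value bound}. Using $2k$-th moments of $P$ with $y^k\le q$, one gets $|P|\le u/3$ outside a set of probability $e^{-cu}$.
\item On the good set the truncated exponential is pointwise $\le e^{u/3}\ll\zeta(\alpha,y)^{1/2}e^{-u/6}$. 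On the exceptional set, Cauchy--Schwarz with the trivial $L^2$ bound $\zeta(\alpha,y)$ suffices.
\item Perron on $\mathrm{Re}(s)=\alpha/2$ together with $x^\alpha\zeta(\alpha,y)\ll\Psi(x,y)\sqrt{\log x\log y}$ finishes the proof.
\end{itemize}

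Your final $o_\eta(1)$ computation is fine in structure, with one arithmetic slip. For $y\ge(\log x)^6$ one has $y^{-1/2}u\log u\le(\log x)^{-2+o(1)}$, not $(\log x)^{-3+o(1)}$. That is exactly why the prefactor $(\log x)^{3/2}(\log y)^{1/2}$ is only barely beaten.
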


Let us consider the size of this saving in different ranges of $y$: first, if $q = x^{1 + \varepsilon}$ for fixed $\varepsilon \in (0,1/2)$, then on the range $\log x  = o( (\log y )^2 )$, we have $S(x,y,q) \ll (\log x) \exp (- \varepsilon^2 u/8)$, which gives a very large saving whenever $\log \log x = o(u)$. If $y$ is smaller than roughly $e^{\sqrt{\log x}}$, the first term in the parentheses begins to dominate. In that range, it may be helpful to note that~\eqref{equ: saddle point est} implies that $\alpha(x,y) \geq \frac{5}{6} (1+o(1))$ uniformly on the entire range of $y$ considered, from which it follows that $(\log x)^{3/2} (\log y)^{1/2} y^{1/2 - \alpha (x,y)} = o(1)$ uniformly for $y$ in our range. We also note that this theorem has no content (in the sense that it is worse than the ``trivial bound'' of $\sqrt{\Psi (x,y)}$ obtained by applying Cauchy--Schwarz and orthogonality) whenever $\frac{\log q}{\log x} \leq 1 + \bigl(\frac{6 \log \log x + 2 \log \log y + C}{u}\bigr)^{1/2}$ for any constant $C > 0$. The need to have $q$ a little larger than $x$ to obtain a non-trivial saving in the proof is related to the anatomy of $y$-smooth numbers, which we explain more in the outline of our proof in Section~\ref{s: outline}. With this discussion in mind, we note that the theorem is actually proved on the range $(\log x)^4 \leq y \leq x^{\frac{1}{4 \log \log x}}$, though it is often worse than the ``trivial bound'' at the extremes of this range.

To the best of the authors' knowledge, Theorem~\ref{t: main} is the first result in the literature to provide non-trivial insight into the first absolute moment of character sums over smooth numbers. Another direction is to bound the count of characters whose sums over smooth numbers are extremely large (i.e.\,almost the size of the trivial bound). This is done in works of~\citet{DrappeauGranvilleShao} and~\citet{Shparlinski}.

\begin{xremark}
\normalfont It seems possible to combine ideas from the character sum work of~\citet{HarperTypical} with~\cite[Theorem~1.3]{HardyXu} to prove that for $q$ prime, $1 \leq x \leq q$, and $\exp ((\log x)^{1/5}) \leq y \leq x$, say, we have $\E_{\chi (q)} \bigl| \sum_{n \leq x, P(n) \leq y} \chi(n) \bigr| \ll \sqrt{\Psi(x,y)} (\log \log (10 + \min \{ x, q/x\}))^{-1/4}$. It would be interesting if one could also obtain a saving over the trivial bound when $y$ is small, say $y \leq (\log x)^{6}$, as is done in the random multiplicative setting in~\cite[Theorem~1.4]{HardyXu}.
\end{xremark}

The bound obtained in Theorem~\ref{t: main} is weaker than the bound obtained in the random multiplicative case~\cite[Theorem~1.2]{HardyXu}, which we believe to be close to optimal, though there we benefit immensely from the genuine independence of the Steinhaus random multiplicative function on primes. In contrast to previous works~\cite{CaichShortInt, HarperTypical}, the proof of this result \emph{does not} directly compare the problem to the random multiplicative case, though the random multiplicative case does serve to inspire the proof strategy. We provide an outline of the proof in Section~\ref{s: outline}, first stating some generalisations and defining the notation used throughout the paper.

\subsection{Generalisations} The proof generalises to other cases, including when we introduce completely multiplicative twists to the sums, and also when we average over continuous characters. In the first case, we prove the following:

\begin{theorem}[Completely multiplicative twists]\label{t: mult twists}
Let $S$ be the saving factor defined in Theorem~\ref{t: main}. Suppose that $h$ is a completely multiplicative function satisfying $|h(p)| \leq 1$ for every prime $p$. Then for $2 \leq x \leq q$ and $(\log x)^6 \leq y \leq x^{\frac{1}{32 \log \log x}}$, we have
\[
\frac{1}{\varphi(q)} \sum_{\chi \bmod q} \Bigl| \sum_{\substack{n \leq x \\ P(n) \leq y}} h(n) \chi(n) \Bigr| \ll \sqrt{\Psi(x,y) S(x,y,q)}.
\]
\end{theorem}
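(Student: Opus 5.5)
The plan is to run the argument for Theorem~\ref{t: main} verbatim and check that every step survives the insertion of $h$. The key observation is that each ingredient in that proof uses only three things: (i) orthogonality of characters modulo $q$ applied to products of sums over smooth numbers; (ii) upper bounds for second and higher moments that, after opening the square, reduce to counting coincidences $n_1\cdots n_k = m_1 \cdots m_k$ among smooth numbers; and (iii) Rankin-type estimates $\sum_{n \leq x, P(n)\leq y} 1 \leq x^{\alpha}\prod_{p\leq y}(1-p^{-\alpha})^{-1}$ together with the saddle point bounds~\eqref{equ: saddle point est}. Since $h$ is completely multiplicative, $h\chi$ is completely multiplicative too. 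Wherever the proof factorises $n = ab$ (say $a$ composed of small primes and $b$ of large primes, or peeling off the largest prime factor), the identity $h(ab)\chi(ab) = h(a)\chi(a)\,h(b)\chi(b)$ holds exactly as for $\chi$ alone.

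Concretely, I would proceed as follows.

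First, decompose the sum over $n \leq x$ with $P(n)\leq y$ exactly as in the proof of Theorem~\ref{t: main}, into the conditioning/splitting pieces used there, typically according to the anatomy of $n$ relative to a parameter governing how many prime factors lie near $y$.

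Second, at each point where the average over $\chi$ of a power of a sum is evaluated by orthogonality, write
\[
\frac{1}{\varphi(q)}\sum_{\chi}\Bigl|\sum_n a_n h(n)\chi(n)\Bigr|^{2} = \sum_{\substack{m\equiv n \bmod q\\ (mn,q)=1}} a_m \overline{a_n}\, h(m)\overline{h(n)} .
\]
Then bound $|h(m)\overline{h(n)}| \leq 1$ and use the triangle inequality. This recovers exactly the nonnegative quantity $\sum_{m\equiv n} |a_m||a_n|$ that was bounded in the untwisted proof, where the coefficients $a_n$ are indicator-type weights. In the diagonal range, where sizes below $q$ force $m=n$, the twisted diagonal is $\sum |a_n|^2|h(n)|^2 \leq \sum |a_n|^2$. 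The off-diagonal terms, which arise when products exceed $q$ and produce the $\min\{q,x^{3/2}\}$ term, are likewise bounded termwise in absolute value, so the same counting bound applies.

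Third, any Hölder or Cauchy--Schwarz step that produces a sum over $\chi$ of a product of partial sums is handled identically, because the final upper bounds are always in terms of counts of smooth numbers (weighted by $|h|\leq 1$). This yields the same $S(x,y,q)$.

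The main obstacle is to verify that the untwisted proof nowhere uses positivity or exact values of $\chi(n)$ beyond $|\chi(n)|\leq 1$ and orthogonality. An example of what could fail would be an identity like $\sum_{\chi}\chi(n)=\varphi(q)\mathbf{1}_{n\equiv 1}$ used with a sign, or a step exploiting cancellation in $\sum_n\chi(n)$ itself. I would check each lemma for this. If some step does use exact evaluation of a mean value, for instance a lower bound or an equality $\E|\cdot|^2 = \Psi$, I would replace it by the corresponding inequality, which is all that an upper bound argument requires. Terms with $(n,q)>1$ vanish for $\chi$ and hence for $h\chi$, so they cause no issue.
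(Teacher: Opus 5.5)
Your plan is essentially the paper's proof. You replace $\chi$ by the completely multiplicative $\psi=h\chi$ and observe that the proof of Theorem~\ref{t: main} uses only $|\psi(n)|\le 1$, complete multiplicativity, and the purely diagonal mean-value bound $\E_{\chi(q)}\bigl|\sum_{n\le X}c(n)h(n)\chi(n)\bigr|^2\le\sum_{n\le X}|c(n)|^2$ for $X\le q$ (used in Proposition~\ref{p: L1 norm} and Lemmas~\ref{l: good generating function} and~\ref{l: large value bound}). Your guesses about that proof's structure are off: there is no anatomy-based decomposition and there are no off-diagonal terms, because $\mathcal{K}$ is chosen with $y^{\mathcal{K}}\le q$ precisely to keep orthogonality diagonal, and the one place complete multiplicativity is genuinely needed is the identity $\sum_{k\le K}P(s,h\chi,y)^k/k!=\sum_{n\le y^K,\,P(n)\le y}a(n)h(n)\chi(n)n^{-s}$ with the same coefficients $0\le a(n)\le 1$ as in~\eqref{equ: truncated expansion}, which uses $h(p)^j=h(p^j)$ and is why the argument does not directly cover $h=\mu$.
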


An exemplary function that this result can be applied to is the Liouville function, $ h = \lambda$, defined as the unique completely multiplicative function such that $\lambda (p) = -1$ for all primes $p$. We note that the upper bound is written in terms of $\sqrt{\Psi(x,y)}$, though in this case, the Cauchy--Schwarz bound is $\sum_{n \leq x, P(n) \leq y} |h(n)|^2$, which can be smaller than $\sqrt{\Psi (x,y)}$. Therefore, the result should only be applied when $\sum_{n \leq x , \, P(n) \leq y} |h(n)|^2 \approx \Psi (x,y)$. The result can likely be generalised to other multiplicative twists, including, for example, the Möbius function, though proving such a result introduces some technical difficulties\footnote{Specifically, since $\mu$ is not completely multiplicative, the coefficients of $\chi(p)^j \mu(p)^j$ in the expansion corresponding to~\eqref{equ: full expansion} are no longer positive, so we cannot easily compare the truncated expansion to $\sum_{n \leq y^k, P(n) \leq y} \frac{\mu (n) \chi (n)}{n^{\alpha/2 + it}}$, as in Lemma~\ref{l: good generating function}.} that we do not resolve for the sake of exposition. We now state generalisations of the result to continuous characters and Dirichlet characters with additional averaging over the conductor:

\begin{theorem}[Other families]\label{t: other families}
Let $S$ be the saving factor defined in Theorem~\ref{t: main}. Suppose that $(\log x)^6 \leq y \leq x^{\frac{1}{32 \log \log x}}$. For $x \leq T$, we have
\[
\frac{1}{T} \int_{T}^{2T} \Bigl| \sum_{\substack{n \leq x \\ P(n) \leq y}} n^{it} \Bigr| \, dt \ll \sqrt{\Psi (x,y) S(x,y, T)}.
\]
Furthermore, letting $\sum\nolimits^{*}_{\chi \bmod q}$ denote the sum over primitive Dirichlet characters $\chi \bmod q$, for $x \leq Q^2$, we have
\[
\frac{1}{Q^2} \sum_{q \leq Q} \, \, \sideset{}{^*}\sum_{\chi \bmod q} \Bigl| \sum_{\substack{n \leq x \\ P(n) \leq y}} \chi(n) \Bigr| \ll \sqrt{\Psi (x,y) S(x,y, Q^2)}.
\]
The same results also hold with multiplicative twists of the form in Theorem~\ref{t: mult twists}.
\end{theorem}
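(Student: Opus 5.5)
The plan is to run the proof of Theorem~\ref{t: main} (and of Theorem~\ref{t: mult twists}) again, changing only the single place where the family enters: the orthogonality (mean value) estimate. Everything else in that argument is deterministic manipulation of the sum $A_\chi(x,y)=\sum_{n\le x,\,P(n)\le y}\chi(n)$. This includes the passage from the first moment to a weighted second moment via Cauchy--Schwarz and a Perron/Parseval representation at the line $\Re s=\alpha(x,y)$, the splitting of the Euler product $\prod_{p\le y}(1-\chi(p)p^{-s})^{-1}$ into a truncated "good" generating function $\sum_{n\le y^k,P(n)\le y}\chi(n)n^{-\alpha/2-it}$ plus a tail, and the conditioning on rare events. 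None of these steps uses that $\chi$ is a Dirichlet character; they only use complete multiplicativity and $|\chi(p)|\le 1$. So I would abstract the argument to a probability space $(\Omega,\mu)$ of completely multiplicative unimodular functions $g$, namely $\chi\bmod q$ with uniform weight, $n\mapsto n^{it}$ with $t$ uniform on $[T,2T]$, or primitive $\chi$ modulo $q\le Q$ weighted by $Q^{-2}$.

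I then isolate the only input about $\Omega$, which should be a mean value bound of the form
\[
\E_{g\in\Omega}\Bigl|\sum_{n\le N} a_n g(n)\Bigr|^2\ll \sum_{n\le N}|a_n|^2 \qquad (N\le M_\Omega),
\]
together with the corresponding bound for products of such polynomials. In Theorem~\ref{t: main} this held with $M_\Omega=q$ by exact orthogonality. The quantity $\min\{q,x^{3/2}\}$ in $S$ is exactly the length up to which diagonal behaviour is used; the exponential term comes from how much of the smooth-number mass lies in $n$ having a divisor-type decomposition shorter than $q$.

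For continuous characters, I would use the Montgomery--Vaughan mean value theorem:
\[
\frac1T\int_T^{2T}\Bigl|\sum_{n\le N}a_n n^{it}\Bigr|^2dt=\sum|a_n|^2\bigl(1+O(n/T)\bigr),
\]
which is $\ll\sum|a_n|^2$ for $N\le T$. This gives $M_\Omega\asymp T$ and hence $S(x,y,T)$. For the conductor average, I would use the multiplicative large sieve:
\[
\sum_{q\le Q}\frac{q}{\varphi(q)}\sideset{}{^*}\sum_{\chi\bmod q}\Bigl|\sum_{n\le N}a_n\chi(n)\Bigr|^2\le (Q^2+N)\sum|a_n|^2.
\]
After dropping the harmless factor $q/\varphi(q)\ge1$ and dividing by $Q^2$, this is $\ll\sum|a_n|^2$ for $N\le Q^2$, giving $M_\Omega=Q^2$. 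Multiplicative twists by $h$ are absorbed into the coefficients $a_n$ exactly as in Theorem~\ref{t: mult twists}: $h$ is completely multiplicative with $|h(p)|\le1$, so the positivity comparisons in Lemma~\ref{l: good generating function} survive.

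The main obstacle is checking that every use of orthogonality in the original proof can be replaced by an upper-bound mean value inequality rather than an exact identity. In particular, any step that computed an off-diagonal contribution exactly, or used a lower bound or positivity of $\E|\cdot|^2$, must be rechecked. My first approach is to verify that only upper bounds $\E|\sum a_n g(n)|^2\ll\sum|a_n|^2$, applied to Dirichlet polynomials of length at most $\min\{M_\Omega,x^{3/2}\}$, are ever needed. Where a polynomial is longer, I would split it into a bounded number of pieces of admissible length, losing only constants.

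A minor further point is the range condition $x\le T$ (resp.\ $x\le Q^2$). It plays the same role as $x\le q$, ensuring the trivial Cauchy--Schwarz step $\E|A_g|^2\ll\Psi(x,y)$ holds.
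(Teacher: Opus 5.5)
Your proposal is correct and is essentially the paper's argument. The paper abstracts to a family $\mathcal{F}_N$ of completely multiplicative $\psi$ with $|\psi|\le 1$ satisfying $\E_N\bigl|\sum_{n\le X}c(n)\psi(n)\bigr|^2\ll\sum_{n\le X}|c(n)|^2$ for $X\le N$, verifies this via the mean value theorem ($N=T$) and the multiplicative large sieve ($N=Q^2$), and notes that Lemmas~\ref{l: good generating function}, \ref{l: large value bound} and Proposition~\ref{p: L1 norm} use orthogonality only as this upper bound, on polynomials of length at most $\min\{N,x^{3/2}\}$, so your ``splitting'' fallback is never needed. Your guessed internal structure of Theorem~\ref{t: main} is slightly off (Perron is applied on $\mathrm{Re}(s)=\alpha/2$ followed by the triangle inequality rather than Parseval, and the exponential term in $S$ comes from the Rankin-trick tail $\omega(n)\ge K$), but this does not affect the reduction.
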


Besides its own interest, the first estimate in this theorem is related to the famous open problem of finding $x^{\varepsilon}$-smooth numbers in the short interval $[x, x+\sqrt{x}]$ when $x$ is sufficiently large, though here the smoothness parameter is smaller than the range there, and one needs to increase the range of $x$ relative to $T$ to make progress on this problem. This is discussed more in the introduction to~\cite{HardyXu}. See the work of~\citet{MR16} for the best-known result on this problem.

The proofs of Theorems~\ref{t: mult twists} and~\ref{t: other families} are almost identical to the proof of Theorem~\ref{t: main}. We outline the necessary adjustments in Section~\ref{s: other families}.

\subsection{Notation}

We employ probabilistic notation throughout the paper, always taking the uniform probability measure on the spaces considered. For example, in the case of Dirichlet characters modulo $q$ (which we consider for most of the paper) we define
\[
\p_{\chi (q)} (E) \coloneqq \frac{1}{\varphi(q)} \# \bigl\{ \chi \bmod q : \chi \in E \bigr\},
\]
for any $E \subseteq \{ \chi \bmod q \}$. As is standard in probabilistic notation, we may write conditions inside the probability, which represent the set of characters satisfying such conditions. For any function $W \colon \{ \chi \bmod q \} \rightarrow \C$, we have the expectation
\[
\E_{\chi (q)} W(\chi) \coloneqq \frac{1}{\varphi(q)} \sum_{\chi \bmod q} W(\chi).
\]
We also often use $\alpha$ as shorthand for the saddle point, $\alpha (x,y)$, defined in~\eqref{equ: saddle defn}, and frequently use the notation $u = \frac{\log x}{\log y}$.

\subsection*{Acknowledgements} 
The authors are grateful to Dimitris Koukoulopoulos for a correspondence related to the anatomy of smooth numbers. SH is grateful to Adam Harper for useful discussions on this problem. This work was completed while the authors were visiting the Centre de Recherches Mathématiques in Montréal; we thank them for excellent working conditions. SH is supported by the Swinnerton-Dyer scholarship at the Warwick Mathematics Institute Centre for Doctoral Training. MWX was supported by a Simons Junior Fellowship from the Simons Foundation.

\section{Deducing Theorem~\ref{t: main} from two key lemmas}

\subsection{Outline of the proof}\label{s: outline}

The goal of the proof is to imitate the argument used in the setting of random multiplicative functions~\cite[Theorem~1.2]{HardyXu}, though interesting difficulties arise from doing this. The proof there is conceptually straightforward, and involves applying Perron's formula followed by the triangle inequality, which leads one to evaluate expectations of random Euler products. In the setting of sums of Dirichlet characters, the corresponding Euler products are
\[
L (s, \chi, y) \coloneqq \sum_{P(n) \leq y} \frac{\chi(n)}{n^{s}} = \prod_{p \leq y} \biggl( 1 - \frac{\chi (p)}{p^{s}} \biggr)^{-1} .
\]
Throughout the paper, we also employ the standard notation $\zeta (s, y) \coloneqq \prod_{p \leq y} \bigl( 1 - \frac{1}{p^{s}} \bigr)^{-1}$. Applying Perron's formula\footnote{In the random multiplicative case~\cite[Section~4]{HardyXu}, the integral is shifted to the line $\mathrm{Re} (s) = \alpha (x^2, y) / 2$. In that work, this gives a larger saving, but here this would decrease the size of the saving in Lemma~\ref{l: good generating function}, and thus decrease the overall size of our saving.} and shifting to the line $\mathrm{Re} (s) = \alpha(x,y) /2$, then applying the triangle inequality, we obtain
\[
\E_{\chi (q)} \Bigl| \sum_{\substack{n \leq x \\ P(n) \leq y}} \chi(n) \Bigr| \leq x^{\alpha/2} \int_{-x}^{x} \frac{\E_{\chi (q) } \bigl| L ( \alpha + it, \chi, y ) \bigr|}{|\alpha/2+it|} \, dt + O(\log x),
\]
where we write $\alpha = \alpha (x,y)$. However, in contrast to the random multiplicative case, it is not obvious that we can obtain any meaningful upper bound for the quantity
\[
\E_{\chi(q)} \bigl| L \bigl( \alpha + it, \chi, y \bigr) \bigr|.
\]
In particular, the values of $(\chi(p))_{p \leq y}$ are not independent, so we cannot simply use the Euler product formula to write this as a product of expectations. To orient the reader, in the random multiplicative case, the analogous quantity can be shown~\cite[Lemma~3.1]{HardyXu} to be $\ll \zeta(\alpha , y)^{1/4}$, which gives a large saving (of size roughly $\zeta(\alpha,y)^{1/4} \approx e^{u/4}$ on our range of $y$) over the Cauchy--Schwarz bound. We observe that it is not necessary to understand $\E_\chi | L ( \alpha + it, \chi, y )|$, and we can choose a different Dirichlet series in the Perron integral instead: one for which we can bound the first absolute moment. For $\mathrm{Re}(s) > 0$, by Taylor expansion of the logarithm, we have
$- \sum_{p \leq y} \log \bigl( 1 - \frac{\chi(p)}{p^{s}} \bigr) = \sum_{j \geq 1} \frac{1}{j} \sum_{p \leq y} \frac{\chi(p)^j}{p^{j s}}$, from which it follows that
\begin{equation}\label{equ: full expansion}
\sum_{\substack{P(n) \leq y}} \frac{\chi(n)}{n^s} = L \bigl( s, \chi, y \bigr) = \sum_{k=0}^{\infty} \frac{1}{k!} \biggl( \sum_{j \geq 1} \frac{1}{j} \sum_{p \leq y} \frac{\chi(p)^j}{p^{j s}} \biggr)^{k} .
\end{equation}
An object that we \emph{can} obtain meaningful estimates for is the first absolute moment of the truncated Taylor expansion,
\[
\sum_{k=0}^{K} \frac{1}{k!} \Bigl( \sum_{j=1}^{\lfloor \frac{\log y}{\log 2} \rfloor} \frac{1}{j} \sum_{p \leq y^{1/j}} \frac{\chi(p)^j}{p^{j(\alpha/2 + it)}} \Bigr)^k ,
\]
so long as $y^K \leq q$, since then we have access to ``perfect'' orthogonality for characters $\chi \bmod q$. Specifically, we can do this by showing that the term in the parentheses is relatively small for almost all characters $\chi \bmod q$, which is the contents of Lemma~\ref{l: large value bound}. In order for this argument to work, we need this truncated expansion to serve as a sufficiently good generating function for the partial sums $\sum_{n \leq x, \, P(n) \leq y} \chi (n)$. A typical $y$-smooth number below $x$ has approximately $u = \frac{\log x}{\log y}$ prime factors in the regime $(\log x)^2 \leq y \leq x^{o(1/\log \log x)}$ (this follows from Erdős--Kac-type results for smooth numbers, see~\citet{Alladi}), so we need to take $K > u(1+\varepsilon)$ for the generating function to capture typical $y$-smooth numbers. This is ultimately the reason why we need $q \geq x^{1 + \varepsilon}$ in the proof, since the condition $y^K \leq q$ needed for orthogonality then enforces $x^{1 + \varepsilon} \leq q$. To show that the previous display serves as a sufficiently good generating function, we compare it to
\[
\sum_{\substack{n \leq y^K \\ P(n) \leq y}} \frac{\chi(n)}{n^{\alpha/2+it}},
\]
which we can certainly use in place of $L ( \alpha + it, \chi, y )$ in the Perron integral whenever $y^K \geq x$. This is the contents of Lemma~\ref{l: good generating function}.

\subsection{Lemmas}
In this section, we will state the lemmas mentioned in the previous section. We first introduce some definitions: for the remainder of the paper, we define
\begin{equation}\label{equ: defn of P}
P (s, \chi, y) \coloneqq \sum_{j = 1}^{\lfloor \frac{\log y}{\log 2} \rfloor} \frac{1}{j} \sum_{p \leq y^{1/j}} \frac{\chi(p)^j}{p^{js}}.
\end{equation}
As mentioned, this will essentially serve as a proxy for $\log L ( \alpha + it, \chi, y )$ whilst also being of length $\leq y$ (in the sense that $\chi$ is only ever evaluated at integers smaller than $y$). For $K \in \N$, we have
\begin{equation}\label{equ: truncated expansion}
\sum_{k=0}^{K} \frac{P(\alpha/2 + it, \chi, y)^k}{k!} = \sum_{k=0}^{K} \frac{1}{k!} \biggl( \sum_{j = 1}^{\lfloor \frac{\log y}{\log 2} \rfloor} \frac{1}{j} \sum_{p \leq y^{1/j}} \frac{\chi(p)^j}{p^{j(\alpha / 2 + it)}} \biggr)^k = \sum_{\substack{n \leq y^K \\ P(n) \leq y}} \frac{a(n) \chi(n)}{n^{\alpha/2+it}},
\end{equation}
where $a(n)$ satisfy the following properties:
\begin{equation}\label{equ: a(n) properties}
a(n) = 1 \text{ if } p^j \mid n \Rightarrow p \leq y^{1/j} \text{ \emph{and} } \omega(n) \leq K. \text{ Otherwise, } 0 \leq a(n) \leq 1.
\end{equation}
These properties follow by comparing coefficients in equation~\eqref{equ: truncated expansion} to those in the full expansion~\eqref{equ: full expansion}, and noting that the expansions agree for the coefficient of $n$ if $n$ satisfies both $p^j \mid n \Rightarrow p \leq y^{1/j}$ and $\omega(n) \leq K$. We now state the two main lemmas that allow us to prove Theorem~\ref{t: main}, and whose proofs we defer to Section~\ref{s:proof of lemmas}. 
\pagebreak
\begin{lemma}[Approximation by prime sum expansion]\label{l: good generating function}
Suppose that $(\log x)^4 \leq y \leq x$, let $u = \frac{\log x}{\log y}$, and let $\alpha = \alpha (x,y) $ be the saddle point. If $K \in \N$ is such that $x \leq y^K \leq \min \{ q, x^{3/2} \}$, then uniformly for $t \in \R$, we have
\[
\E_{\chi (q)} \biggl| \sum_{\substack{n \leq y^K \\ P(n) \leq y}} \frac{\chi(n)}{n^{\alpha / 2 + it}} - \sum_{k = 0}^{K} \frac{P (\alpha / 2 + it, \chi, y)^{k}}{k!} \biggr| \ll E (x, y, K), 
\]
where
\[
E (x, y, K) \coloneqq \zeta (\alpha, y)^{1/2} \biggl(y^{(1/2 - \alpha)/2} + \exp \biggl\{ - \frac{u}{8} \biggl( \frac{K}{u} -1 \biggr)^2 \biggr\} \biggr).
\]
\end{lemma}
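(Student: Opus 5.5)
By \eqref{equ: truncated expansion}, the difference inside the expectation is a single Dirichlet polynomial
\[
D(\chi) \coloneqq \sum_{\substack{n \leq y^K \\ P(n) \leq y}} \frac{(1 - a(n)) \chi(n)}{n^{\alpha/2 + it}},
\]
supported on $n \leq y^K \leq q$. The plan is to use Cauchy--Schwarz, $\E_{\chi(q)} |D(\chi)| \leq (\E_{\chi(q)} |D(\chi)|^2)^{1/2}$, and then orthogonality. Since all $n$ in the support satisfy $n \leq y^K \leq q$, distinct $n, m$ with $(nm,q)=1$ have $n \not\equiv m \bmod q$. Hence the off-diagonal terms vanish and
\[
\E_{\chi(q)} |D(\chi)|^2 \leq \sum_{\substack{n \leq y^K \\ P(n) \leq y}} \frac{(1-a(n))^2}{n^{\alpha}} .
\]
By \eqref{equ: a(n) properties}, $0 \leq 1 - a(n) \leq 1$, and $1-a(n) = 0$ unless either (i) some $p^j \mid n$ with $p > y^{1/j}$ (with $j \geq 2$ necessarily), or (ii) $\omega(n) > K$. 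It therefore suffices to show that each of the sums of $n^{-\alpha}$ over $y$-smooth $n$ of type (i) or of type (ii) is $\ll \zeta(\alpha,y)$ times the square of the corresponding term in $E(x,y,K)$. The square root then gives the lemma.

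For case (i), $n$ is divisible by $p^j$ with $p \leq y$, $p^j > y$ and $j \geq 2$. Writing $n = p^j m$ with $m$ arbitrary $y$-smooth, the sum is at most
\[
\zeta(\alpha,y) \sum_{j \geq 2} \sum_{y^{1/j} < p \leq y} p^{-j\alpha}.
\]
For $j = 2$ this is $\sum_{p > y^{1/2}} p^{-2\alpha} \ll y^{(1-2\alpha)/2}$, using $\alpha > 5/6 > 1/2$ from \eqref{equ: saddle point est} in the range $(\log x)^4 \leq y$, together with $\sum_{p>z} p^{-2\alpha} \ll z^{1-2\alpha}/\log z$. For general $j$, the terms $p^{-j\alpha}$ with $p^j > y$ sum to about $\pi(y^{1/j})\, y^{-\alpha} \ll y^{1/j - \alpha}$. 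Summing over $2 \leq j \leq \log y/\log 2$ gives $\ll y^{1/2-\alpha} + (\log y)\, y^{1/3-\alpha}$, and the second term is $\ll y^{1/2-\alpha}$ since $y^{1/6} \gg \log y$. This matches $(y^{(1/2-\alpha)/2})^2$.

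For case (ii), use Rankin's trick in the variable $\omega$: for any $z \geq 1$,
\[
\sum_{\substack{P(n) \leq y \\ \omega(n) > K}} n^{-\alpha} \leq z^{-K} \sum_{P(n) \leq y} \frac{z^{\omega(n)}}{n^{\alpha}} = z^{-K} \prod_{p \leq y} \Bigl(1 + \frac{z}{p^\alpha - 1}\Bigr).
\]
Comparing with $\zeta(\alpha,y) = \prod_{p \leq y} (1 + \frac{1}{p^\alpha - 1})$, this is
\[
\leq \zeta(\alpha,y)\, \exp\Bigl((z-1)\, \lambda - K \log z\Bigr), \qquad \lambda \coloneqq \sum_{p \leq y} \frac{1}{p^\alpha - 1}.
\]
The key input is $\lambda = u(1 + O(1/\log u))$, or at least $\lambda \leq u + O(u/\log y)$ suitably. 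This follows from \eqref{equ: saddle defn}: weighting by $\log p$ gives exactly $\log x$, and the mass is concentrated on primes near $y$, so $\lambda \approx \log x/\log y = u$. Choosing $z = K/\lambda$ gives the exponent $-\lambda\, h(K/\lambda)$ with $h(v) = v\log v - v + 1 \geq (v-1)^2/4$ for $1 \leq v \leq 3/2$, which is the range here since $K \leq \tfrac{3}{2}u$ from $y^K \leq x^{3/2}$. This yields $\exp\{-\tfrac{u}{4}(K/u-1)^2\}$ up to error, i.e.\ the square of the second term of $E$. The slack constant $1/8$ versus $1/4$ absorbs the discrepancy between $\lambda$ and $u$.

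The main obstacle is this last step: making precise that $\lambda = \sum_{p\leq y}(p^\alpha-1)^{-1}$ is at most about $u$ with an error small enough not to destroy the Gaussian saving when $K/u - 1$ is small. I would try a partial-summation argument. Primes $p \leq y^{1-\delta}$ contribute to $\lambda$ at most $(1-\delta)^{-1}/\log y$ times their contribution to $\log x$, and this smaller prime range is tiny because $p^{-\alpha}$ grows with $p$ when $\alpha < 1$. If the error is not uniform, I would use the inequality $\lambda \leq (\log x)/\log 2$ only where needed, or absorb the loss into the weakened constant $1/8$.
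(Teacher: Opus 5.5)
\paragraph{Verdict.} Your reduction is exactly the paper's: Cauchy--Schwarz, orthogonality via $y^K\le q$, and a split into $n$ divisible by some $p^j>y$ versus $n$ with $\omega(n)>K$. Case (i) is fine. You only need $\alpha>\frac12$ there, which is just as well, since at $y=(\log x)^4$ one only has $\alpha\approx\frac34$, not $\alpha>\frac56$. The genuine gap is the step you flag in case (ii), and it is not a technicality.

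\paragraph{Why the $\lambda$-versus-$u$ step fails.} The input $\lambda\le u+O(u/\log y)$ is false. Since $\log p\le\log y$ in \eqref{equ: saddle defn}, one always has $\lambda\ge u$, and for large $u$
\[
\lambda-u=\sum_{p\le y}\Bigl(1-\frac{\log p}{\log y}\Bigr)\frac{1}{p^{\alpha}-1}\gg\frac{u}{\log u}.
\]
The reason is that, by \eqref{equ: saddle point est}, a positive proportion of $\lambda$ comes from primes with $1-\frac{\log p}{\log y}\asymp\frac{1}{\log u}$. The relevant scale is $1/\log u$, not $1/\log y$. Three consequences follow.
\begin{itemize}
\item Your choice $z=K/\lambda$ violates $z\ge1$ unless $K\ge\lambda$.
\item The $\frac18$ in $E$ is exactly the square root of the $\frac14$ you need in $L^2$, so it offers no slack.
\item No repair of the $L^2$ route exists for small $\delta=K/u-1$.
\end{itemize}

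\paragraph{Why $L^2$ cannot work for small $\delta$.} Take the probability measure $n^{-\alpha}/\zeta(\alpha,y)$ on $y$-smooth $n$. The events $\{p\mid n\}$ are independent with probability $p^{-\alpha}$. So $\omega(n)$ has mean $S\coloneqq\sum_{p\le y}p^{-\alpha}=\lambda+O(1)$ and variance at most $S$. Meanwhile $\log n$ has mean $\log x$ (this is \eqref{equ: saddle defn}) and variance $\ll u(\log y)^2$. By Chebyshev, for $u+C\sqrt{u}\le K\le S-C\sqrt{S}$, at least half of the mass lies on $n\le y^K$ with $\omega(n)>K$. For these $n$ one has $a(n)=0$, since each term of $P^k$ involves at most $k\le K$ distinct primes. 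For $q$ prime the orthogonality step is an equality, so $\E_{\chi(q)}|D(\chi)|^2\gg\zeta(\alpha,y)$ in this window. At $\delta\asymp1/\log u$, however, you would need $\ll\zeta(\alpha,y)\bigl(y^{1/2-\alpha}+e^{-cu/(\log u)^2}\bigr)=o(\zeta(\alpha,y))$. Rankin's trick alone already sees this: by convexity of $c\mapsto\log(1+e^{c}w)$ one has $\prod_{p\le y}\bigl(1+\frac{z}{p^{\alpha}-1}\bigr)\ge z^{S}\zeta(\alpha,y)$. Hence $z^{-K}\prod_{p\le y}(\cdots)$ never drops below $\zeta(\alpha,y)$ when $K\le S$ and $z\ge1$.

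\paragraph{The paper's proof has the same obstruction.} It takes the same route. With $c=\log(K/u)$, the exponent $\sum_{p\le y}e^{c}p^{-\alpha}-cK$ equals $\frac{K}{u}S-K\log\frac{K}{u}$, not $\frac{Ku}{S}\bigl(1-\log\frac{K}{u}\bigr)$ as displayed there. Rewriting,
\[
\frac{K}{u}S-K\log\frac{K}{u}=\log\zeta(\alpha,y)+\delta(S-u)-u\bigl((1+\delta)\log(1+\delta)-\delta\bigr)+O(1),
\]
and $\delta(S-u)$ is precisely your $\lambda$-versus-$u$ term.

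\paragraph{What the argument does prove.} Since $(1+\delta)\log(1+\delta)-\delta\ge\frac{5}{12}\delta^2$ on $[0,\frac12]$, either version gives $\zeta(\alpha,y)e^{-u\delta^2/4}$ for the $\omega(n)>K$ sum, and hence the lemma, whenever $S-u\le\delta u/6$. By \eqref{equ: prime sum}, this holds once $\delta\ge C\bigl(\frac{\log\log u}{\log u}+\frac{\log\log x}{u}\bigr)$. In the window $C'/\sqrt{u}\le\delta\le c/\log u$, though, the $L^2$ method cannot give the stated bound. The lemma as stated, uniformly in $K$, would require a genuinely $L^1$ argument there.
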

This lemma is responsible for the form of the saving in Theorem~\ref{t: main}. Importantly, when $K$ is a little larger than $u$, this bound will be significantly smaller than $\zeta (\alpha, y)^{1/2}$, which is the Cauchy--Schwarz bound for the size of the generating function.

\begin{xremark}
\normalfont The first error in the parentheses comes from the fact that we do not capture prime factors that occur with high multiplicity (since $p \leq y^{1/j}$ in the definition of $P(s,\chi,y)$ in~\eqref{equ: defn of P}). This limits the size of the saving when $y$ is small, since in this case, prime factors typically appear with higher multiplicity. The second error in the parentheses captures the fact that the approximation only sees integers with fewer than $K$ prime factors, and is consistent with the fact that the count of prime factors of a $y$-smooth number follows a Gaussian distribution~\cite{Alladi} (with more effort, the constant could be taken to be $1/4$ instead of $1/8$, but we do not pursue this improvement here). In practice, we obtain this saving from Rankin's trick, which is morally equivalent to applying a Chernoff bound. An alternative method would be to remove integers with fewer than $K$ prime factors before applying Perron's formula.
\end{xremark}

\begin{lemma}[Tail bound for sum over primes]\label{l: large value bound}
Suppose that $(\log x)^4 \leq y \leq x^{\frac{1}{4 \log \log x}}$, let $u = \frac{\log x}{\log y}$, and let $\alpha = \alpha (x,y)$ be the saddle point. There exists an absolute constant $c \geq 1$ so that for $V \geq c \sqrt{u}$ where $u=\frac{\log x}{\log y}$, and for any $t \in \R$, we have
\[
\p_{\chi(q)} \bigl( |P(\alpha / 2 + it, \chi , y) | > V \bigr) \ll \sqrt{k} \biggl( \frac{k u}{2 V^2} \biggr)^{k} ,
\]
uniformly for $k \in \N$ satisfying $k \leq \lfloor \frac{\log q}{\log y} \rfloor$.
\end{lemma}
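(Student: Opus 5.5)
Markov's inequality applied to a high even moment, with the moment computed by orthogonality of characters modulo $q$. For $k \le \lfloor \log q/\log y \rfloor$, write
\[
\p_{\chi(q)}\bigl(|P(\alpha/2+it,\chi,y)|>V\bigr) \le V^{-2k}\, \E_{\chi(q)} \bigl|P(\alpha/2+it,\chi,y)\bigr|^{2k}.
\]
Since $\chi(p)^j = \chi(p^j)$, we have $P(s,\chi,y) = \sum_{m} b(m)\chi(m) m^{-s}$ with $b(p^j) = 1/j$ for $p^j \le y$ and $b(m)=0$ otherwise. Every $m$ in the support satisfies $m \le y$.

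Expanding $P^k$ gives $\sum_{n \le y^k} c_k(n)\chi(n) n^{-s}$, where $c_k(n) = \sum_{m_1\cdots m_k = n} b(m_1)\cdots b(m_k)$ is nonnegative. Because $n,n' \le y^k \le q$, the orthogonality relation $\E_{\chi(q)}\chi(n)\overline{\chi(n')} = \mathbf{1}_{n \equiv n' \bmod q,\ (n,q)=1}$ reduces to the diagonal $n=n'$, up to the harmless boundary case $n=n'+q$. This requires $\min(n,n')<q$, and a little care is needed if $y^k = q$ exactly; I would handle it by positivity or by dropping to $k-1$. Hence
\[
\E_{\chi(q)} |P|^{2k} \le \sum_{n} \frac{c_k(n)^2}{n^{\alpha}},
\]
and $t$ disappears, which gives the uniformity in $t$. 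This is exactly the moment of the Steinhaus random model $\sum_m b(m) f(m) m^{-\alpha/2}$.

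Next I bound the diagonal sum by splitting $P = P_1 + P_2$, where $P_1$ is the $j=1$ prime sum and $P_2$ collects $j \ge 2$, and using $|a+b|^{2k} \le 2^{2k-1}(|a|^{2k}+|b|^{2k})$. The constant $c$ absorbs these factors, and I must check that this only costs $C^k$, which becomes $(C^2 k u/(2V^2))^k$ and is fine after enlarging $c$.

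For $P_1$, standard Steinhaus-type moment counting (as in Harper or Granville--Soundararajan) gives
\[
\sum_n \frac{c_k(n)^2}{n^\alpha} \le k! \Bigl(\sum_{p\le y} p^{-\alpha}\Bigr)^k .
\]
The factor $k!$ comes from counting orderings; repeated primes only help. By \eqref{equ: saddle point est}, $p^{-\alpha} = p^{-1}\, p^{1-\alpha}$ with $y^{1-\alpha} \asymp u\log u$. Partial summation with the prime number theorem gives
\[
\sum_{p \le y} p^{-\alpha} \approx \frac{y^{1-\alpha}}{(1-\alpha)\log y} \asymp \frac{u\log u}{\log(u\log u)} \asymp u .
\]
More cleanly, the saddle point definition \eqref{equ: saddle defn} gives $\sum_{p\le y} \log p/(p^\alpha-1) = \log x$, and I would derive $\sum_{p\le y} p^{-\alpha} \le (1+o(1))u$ from the fact that most of the mass comes from $p$ near $y$, where $\log p \approx \log y$. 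The hypothesis $y \ge (\log x)^4$ keeps $\alpha$ bounded away from $1/2$, so the contribution of small primes is negligible. Then Stirling's formula $k! \ll \sqrt{k}(k/e)^k$ yields $\sqrt{k}\,(k u/(eV^2))^k$, which is at most the claimed bound since $e>2$. This explains the $\sqrt k$ and the constant $2$ in the statement: there is slack, and $e$ versus $2$ leaves room for $(1+o(1))$ losses.

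For $P_2$, the same computation gives $k!\,(\sum_{j\ge2}\sum_p j^{-2}p^{-j\alpha})^k$, and this inner sum is $O(1)$ because $2\alpha > 1$ comfortably. It is therefore far smaller than $u$ and is handled by the same estimate. Alternatively, I would bound $|P_2|$ deterministically by $\sum_{p} p^{-\alpha} + O(1) \ll y^{1/2-\alpha}\cdots$. Even more simply, $|P_2| \le \sum_{j\ge2}\sum_p p^{-j\alpha/2}$, and the $j=2$ term is $\sum_{p\le\sqrt y} p^{-\alpha}$, which is of size $\ll u\, y^{-(1-\alpha)/2}$-ish and smaller than $\sqrt{u}$ for large $c$. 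Either route works; I would use the moment route to avoid fuss.

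The main obstacle is the precise estimate $\sum_{p\le y} p^{-\alpha} \le (e/2)\,u$, or at least $\le Cu$ with the constant absorbed into $c$. This needs the saddle point asymptotics \eqref{equ: saddle point est}, valid for $y \le x^{1/3}$, which our range $y\le x^{1/(4\log\log x)}$ ensures, together with the prime number theorem. Since the statement allows the absolute constant $c$, any bound $\sum_{p \le y} p^{-\alpha} \ll u$ suffices: the whole bound $(C k u/V^2)^k$ becomes at most $(ku/(2V^2))^k$ after rescaling $V$ by $\sqrt{C}$. The remaining care points are handling the orthogonality boundary correctly and keeping the $2^{2k}$ from splitting absorbed.
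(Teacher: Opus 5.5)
\emph{Verdict: there is a genuine gap.} It lies in the step you treat as routine, absorbing constant factors into $c$.

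The statement fixes the constant in the base, $\sqrt{k}\,\bigl(ku/(2V^2)\bigr)^k$, while $c$ only enters as the threshold $V\ge c\sqrt u$. A bound of the form $\sqrt{k}\,(Cku/V^2)^k$ depends on $V$ in exactly the same way. So its ratio to the target is $(2C)^k$ for \emph{every} admissible $V$. Enlarging $c$ only discards some values of $V$ and leaves this ratio, which is unbounded in $k$, untouched. ``Rescaling $V$'' is not available, since $V$ is the given threshold. Choosing a different moment does not help either: optimising in $k$, your bound gives about $e^{-V^2/(eCu)}$ against $e^{-2V^2/(eu)}$ in the statement, and $V^2/u\to\infty$ for the intended $V=u/3$.

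Two consequences follow.
\begin{enumerate}
\item Your splitting $|a+b|^{2k}\le 2^{2k-1}(|a|^{2k}+|b|^{2k})$ costs $4^k$, which is fatal.
\item The bound $\sum_{p\le y}p^{-\alpha}\ll u$ is not enough. After Stirling you need $\sum_{p\le y}p^{-\alpha}\le(e/2-\varepsilon)u$, with only $e^{o(k)}$ further losses.
\end{enumerate}
Your route to the second point is also flawed. The sum is not $(1+o(1))u$: primes with $p^{1-\alpha}\ll 1$ contribute about $\log\log y$ on top of the roughly $u$ coming from primes near $y$. When $y$ is near $x^{1/(4\log\log x)}$, $\log\log y\approx u/4$. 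The right input is \eqref{equ: prime sum} together with $\log\log x\le u/4$, which is exactly what the upper constraint on $y$ in the lemma is for; it gives $(5/4+o(1))u<(e/2)u$. The same $\log\log y$ term rules out your deterministic option for the $j=2$ piece. Its trivial bound $\frac12\sum_{p\le\sqrt y}p^{-\alpha}\ge\frac12\sum_{p\le\sqrt y}p^{-1}$ is not $o(\sqrt u)$ at the top of the range.

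Your orthogonality and diagonal computation for the $j=1$ sum is the same as the paper's. The paper avoids the $4^k$ loss by splitting thresholds rather than moments.
\begin{itemize}
\item The $j\ge3$ terms are bounded deterministically by an absolute constant $A$.
\item It uses $\p(|P|>V)\le\p(|P_1|>V-2U)+\p\bigl(|\sum_{p\le\sqrt y}\chi(p)^2p^{-\alpha-2it}|>U\bigr)$ with $A\le U\le V/100$. The main term then loses only $(1-2U/V)^{-2k}\le(0.98)^{-2k}$, which fits in the room between $5/4$ and $e/2$.
\item The $j=2$ sum gets its own Markov bound via $\sum_p p^{-2\alpha}\le B$, with $U=V\sqrt{B/u}$. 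Guaranteeing $U\ge A$ is where $V\ge c\sqrt u$ is genuinely used.
\end{itemize}

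Your moment route can also be repaired by Minkowski's inequality. Write $\|X\|_{2k}=(\E_{\chi(q)}|X|^{2k})^{1/(2k)}$. Then $\|P\|_{2k}\le\|P_1\|_{2k}+\frac12\sqrt{Bk}+A$, which costs only a factor $(1+O(u^{-1/2}))^{2k}$ relative to $k!\,(\sum_{p\le y}p^{-\alpha})^k$ and fits in the same slack.

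Finally, your orthogonality boundary worry is unfounded: if $1\le n,n'\le q$ and $n\equiv n'\bmod q$, then $n=n'$.
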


\begin{xremark}
\normalfont This lemma is more general than necessary; in applications, we will only apply it with $V \asymp u$. One could obtain a better estimate with $1/e$ in place of $1/2$, but this would involve tighter restrictions on the upper range of $y$.
\end{xremark}

\subsection{Proof of Theorem~\ref{t: main}, assuming Lemmas~\ref{l: good generating function} and~\ref{l: large value bound}}\label{s: Proof of main theorem}
In this section, we will assume Lemmas~\ref{l: good generating function} and~\ref{l: large value bound}; as mentioned, their proofs are deferred to Section~\ref{s:proof of lemmas}. Theorem~\ref{t: main} will be an immediate consequence of the following proposition, which is a straightforward application of the lemmas.
\begin{proposition}\label{p: L1 norm}
Suppose that $x \leq q$ and $(\log x)^4 \leq y \leq x^{\frac{1}{4 \log \log x}}$. Let $\alpha = \alpha (x,y)$ be the saddle point, let $u = \frac{\log x}{\log y}$, and define $\mathcal{K} \coloneqq \lfloor \frac{\log \min \{ x^{3/2}, q \}}{\log y} \rfloor$. Uniformly for any $t \in \R$, we have
\[
\E_{\chi (q)} \biggl| \sum_{\substack{n \leq y^{\mathcal{K}} \\ P(n) \leq y}} \frac{\chi(n)}{n^{\alpha / 2 + it}} \biggr| \ll \zeta (\alpha, y)^{1/2} \biggl(y^{(1/2 - \alpha)/2} + \exp \biggl\{ - \frac{u}{8} \biggl( \frac{\log \min\{ q, x^{3/2} \}}{\log x} - 1 \biggr)^2 \biggr\} \biggr).
\]
\end{proposition}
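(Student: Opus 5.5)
Set $K = \mathcal{K}$ and $s = \alpha/2 + it$. Note first that $x \leq y^{\mathcal{K}}$: since $y \leq x^{1/(4\log\log x)}$ and $\min\{q,x^{3/2}\} \geq x$, we have $\mathcal{K} \geq u - 1$. Replacing $\mathcal{K}$ by $\max\{\mathcal{K}, \lceil u \rceil\}$ if necessary only changes the error term by a harmless amount. So Lemma~\ref{l: good generating function} applies and reduces the proposition, via the triangle inequality, to bounding
\[
\E_{\chi(q)} \biggl| \sum_{k=0}^{\mathcal{K}} \frac{P(s,\chi,y)^k}{k!} \biggr|
\]
by the same right-hand side. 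Indeed $E(x,y,\mathcal{K})$ is already of the claimed shape, because $\mathcal{K}/u \geq \log\min\{q,x^{3/2}\}/\log x - 1/u$ and the loss from the floor only costs a constant factor in the exponential.

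To bound the truncated expansion, I split according to the size of $|P(s,\chi,y)|$. Fix $V = Cu$ for a suitable constant $C$, say $C = 4$. On the event $|P| \leq V$, the truncated exponential satisfies $|\sum_{k \leq \mathcal{K}} P^k/k!| \leq e^{|P|}$. This is not small enough, though: $e^{Cu}$ is far bigger than $\zeta(\alpha,y)^{1/2}$, which is roughly $e^{u/2}$ up to factors. So the split must be combined with an $L^2$ bound. The plan is Cauchy--Schwarz on each piece. By orthogonality, since every integer appearing has size at most $y^{\mathcal{K}} \leq q$, the expansion~\eqref{equ: truncated expansion} gives
\[
\E_{\chi(q)} \biggl|\sum_{k \leq \mathcal{K}} \frac{P^k}{k!}\biggr|^2 = \sum_{n \leq y^{\mathcal{K}},\, P(n)\leq y} \frac{a(n)^2}{n^{\alpha}} \leq \zeta(\alpha,y).
\]
Here distinct $n,m \leq q$ coprime to $q$ are separated; if $(n,q) > 1$ the term vanishes, which only helps. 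Hence
\[
\E\Bigl|\sum_{k \leq \mathcal{K}} \frac{P^k}{k!}\Bigr| \mathbf{1}_{|P|>V} \leq \zeta(\alpha,y)^{1/2}\, \p(|P|>V)^{1/2}.
\]

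The main obstacle is the event $|P|\leq V$. There the $L^2$ bound alone gives only $\zeta(\alpha,y)^{1/2}$, with no saving. I would instead compare with the untruncated quantity. The saving should come from the fact that the full sum $\sum_{n \leq y^{\mathcal{K}}} \chi(n) n^{-s}$ is \emph{not} where the smallness lives. Rather, the smallness is already encoded in Lemma~\ref{l: good generating function}'s error term, while the main term is controlled by $L^1$ of $\exp$-type quantities. So the plan is the following. For $k \leq \mathcal{K}/2$, bound $\E|P|^k$ directly: by Hölder, $\E|P|^k \leq (\E|P|^{2k})^{1/2}$, and orthogonality (valid since $y^{2k} \leq q$ when $2k \leq \mathcal{K}$) gives $\E|P|^{2k} \ll k!\,(\sum_p p^{-\alpha} + O(1))^k$. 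With $\sum_{p\leq y} p^{-\alpha} \asymp u$, this yields
\[
\sum_{k} \frac{\E|P|^k}{k!} \ll \sum_k \frac{u^{k/2}}{\sqrt{k!}} \approx e^{u/2 + o(u)},
\]
which is comparable to $\zeta(\alpha,y)^{1/2}$ up to the needed savings only if the constants match. This is the delicate point. For the terms with $\mathcal{K}/2 < k \leq \mathcal{K}$, I use the pointwise bound $|P|^k/k! \leq V^k/k!$ on $|P|\leq V$, together with Lemma~\ref{l: large value bound} with $k$ near $\mathcal{K}$ on the complement. Taking $V = c\sqrt{u}\cdot\lambda$ optimised, the tail probability $\sqrt{k}(ku/2V^2)^k$ is tiny, at most $e^{-\mathcal{K}}$, and beats any loss. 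I expect to choose $V$ so that $ku/(2V^2) \leq e^{-2}$, making the large-value contribution negligible against $\exp\{-\frac{u}{8}(\mathcal{K}/u-1)^2\}$. The remaining work is to check that the moderate range contributes at most $\zeta(\alpha,y)^{1/2}$ times the stated saving. I would first try to do this by redistributing $n^{-\alpha/2}$ as $n^{-\alpha/2}\cdot n^{\pm\delta}$ (Rankin) inside the $L^2$ orthogonality computation.
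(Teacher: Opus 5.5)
There is a genuine gap, and it is exactly where you place the ``main obstacle'': you never bound the contribution of the event $\{|P(\alpha/2+it,\chi,y)|\le V\}$. This event is in fact harmless once $V$ is a suitable constant times $u$ with the constant \emph{below} $1/2$; the paper takes $V=u/3$.

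\begin{itemize}
\item On $\{|P|\le u/3\}$ the trivial pointwise bound gives $\bigl|\sum_{k\le\mathcal K}P^k/k!\bigr|\le e^{u/3}$. By~\eqref{equ: u upper bound log zeta}, i.e.\ $u\le\log\zeta(\alpha,y)+O(1)$, this is $\ll\zeta(\alpha,y)^{1/2}e^{-u/6}$.
\item The typical size of $|P|$ is only about $\sqrt u$, so $u/3$ is already deep in the tail. Lemma~\ref{l: large value bound} with $k\approx 2u/(9e)$ (admissible, since $k\le u\le\log q/\log y$) gives $\p_{\chi(q)}(|P|>u/3)\ll\sqrt u\,e^{-2u/(9e)}$.
\item Your own Cauchy--Schwarz step on the complement then gives $\ll\zeta(\alpha,y)^{1/2}e^{-u/25}$.
\end{itemize}

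Both pieces are acceptable because the right-hand side of the proposition is always $\gg\zeta(\alpha,y)^{1/2}e^{-u/32}$, as $\log\min\{q,x^{3/2}\}/\log x-1\in[0,1/2]$.

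Your mistake was to demand a tail probability as small as $e^{-\mathcal K}$ that ``beats any loss''. That forces $V\gtrsim u$ (you wrote $V=4u$), after which $e^{V}$, or the terms $V^k/k!$ with $k$ near $\mathcal K$, are exponentially larger than $\zeta(\alpha,y)^{1/2}$. In fact only a probability saving of $e^{-u/16}$ is needed.

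The replacement route you sketch does not close either. Write $L=\sum_{p\le y}p^{-\alpha}$. Bounding $\E|P|^k\le(\E|P|^{2k})^{1/2}\ll\sqrt{k!}\,L^{k/2}$ and summing gives $\sum_k L^{k/2}/\sqrt{k!}\approx e^{L/2}$. Since $\log\zeta(\alpha,y)=L+O(1)$, this is the trivial size $\zeta(\alpha,y)^{1/2}$ with no saving; the passage to $(\E|P|^{2k})^{1/2}$ is what loses here. The closing suggestion about redistributing $n^{\pm\delta}$ via Rankin's trick is not an argument.

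Once $V=u/3$ is used, the rest of your structure works and is essentially the paper's proof: the triangle inequality against Lemma~\ref{l: good generating function}, and Cauchy--Schwarz with $\sum a(n)^2n^{-\alpha}\le\zeta(\alpha,y)$ on $\{|P|>V\}$. The paper applies Cauchy--Schwarz to the untruncated sum there, but either works.

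A smaller point: you cannot replace $\mathcal K$ by $\max\{\mathcal K,\lceil u\rceil\}$, since $\mathcal K$ fixes the length of the sum on the left-hand side. Instead, note that $\mathcal K<u$ forces $\log\min\{q,x^{3/2}\}/\log x-1<1/u$. Then the right-hand side is $\asymp\zeta(\alpha,y)^{1/2}$, and Cauchy--Schwarz with orthogonality already suffices in that case.
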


\begin{xremark}
\normalfont Since we wish to apply Lemma~\ref{l: good generating function}, it is convenient for the length of the sum to be an integer power of $y$. On first reading, the sum should be thought of as going up to $\min \{ x^{3/2}, q \}$.
\end{xremark}

\begin{proof}[Proof of Proposition~\ref{p: L1 norm}, assuming Lemmas~\ref{l: good generating function} and~\ref{l: large value bound}. ]
By linearity of expectation, the term that we wish to bound is equal to
\[
\E_{\chi (q)} \mathbf{1}_{|P (\alpha / 2 + it, \chi, y)| \leq V} \Bigl| \sum_{\substack{n \leq y^{\mathcal{K}} \\ P(n) \leq y}} \frac{\chi(n)}{n^{\alpha /2+it}} \Bigr| + \E_{\chi (q)} \mathbf{1}_{|P (\alpha / 2 + it, \chi, y)| > V} \Bigl| \sum_{\substack{n \leq y^{\mathcal{K}} \\ P(n) \leq y}} \frac{\chi(n)}{n^{\alpha /2+it}} \Bigr| .
\]
Applying the Cauchy--Schwarz inequality, the second term in the previous display is bounded above by
\[
\p_{\chi (q)} \bigl( |P (\alpha / 2 + it, \chi, y)| > V \bigr)^{1/2} \biggl( \E_{\chi (q)} \Bigl| \sum_{\substack{n \leq \min \{x^{3/2}, q\} \\ P(n) \leq y}} \frac{\chi(n)}{n^{\alpha / 2 + it}} \Bigr|^2 \biggr)^{1/2}
\]
By applying orthogonality and dropping the upper bound condition on the size of $n$, we see that the expectation inside the parentheses is $\ll \zeta(\alpha, y)$. To bound the probability, we apply Lemma~\ref{l: large value bound} with $V = u/3$ and $k = 2 u / 9e$, to find the previous display is
\[
\ll k^{1/4} \biggl( \frac{k u}{2 V^2} \biggr)^{k/2} \zeta(\alpha, y)^{1/2} \ll u^{1/4} e^{-u/9e} \zeta(\alpha, y)^{1/2} \ll e^{-u/25} \zeta(\alpha, y)^{1/2} .
\]
We note that there is considerable flexibility in these choices, and since the right-hand side of Proposition~\ref{p: L1 norm} is always $\gg \zeta (\alpha, y)^{1/2} e^{-u/32}$, this bound is acceptable. It remains to show that
\begin{multline*}
\E_{\chi (q)} \mathbf{1}_{|P (\alpha/2 + it, \chi, y)| \leq V} \Bigl| \sum_{\substack{n \leq y^{\mathcal{K}} \\ P(n) \leq y}} \frac{\chi(n)}{n^{\alpha / 2 + it}} \Bigr| \\ 
\ll \zeta (\alpha, y)^{1/2} \biggl(y^{(1/2 - \alpha)/2} + \exp \biggl\{ - \frac{u}{8} \biggl( \frac{\log \min\{ q, x^{3/2} \}}{\log x} -1 \biggr)^2 \biggr\} \biggr),
\end{multline*}
when $V = u/3$. By the triangle inequality, we can upper-bound the left-hand side by
\[
\E_{\chi(q)} \mathbf{1}_{|P (\alpha/2 + it, \chi, y)| \leq V} \Bigl| \sum_{k=0}^{\mathcal{K}} \frac{P (\alpha/2 + it, \chi, y)^k}{k!} \Bigr| + \E_{\chi (q)} \biggl| \sum_{\substack{n \leq y^{\mathcal{K}} \\ P(n) \leq y}} \frac{\chi(n)}{n^{\alpha / 2 + it}} - \sum_{k = 0}^{\mathcal{K}} \frac{P (\alpha / 2 + it, \chi, y)^{k}}{k!} \biggr|,
\]
where $\mathcal{K} = \lfloor \frac{\log \min \{ x^{3/2}, q \}}{\log y} \rfloor$. To handle the first term, we apply the triangle inequality and make use of the condition that $|P (\alpha/2 + it, \chi, y)| \leq V = u/3$, delivering a bound of $\ll e^{u/3}$ by completing the sum. This is $\ll \zeta (\alpha, y)^{1/2} e^{-u/6}$ (which follows immediately from the fact that $u \leq \zeta(\alpha, y) + c$ for some absolute constant $c>0$, see~\eqref{equ: u upper bound log zeta}), and seeing as the bound we are aiming for is always $\gg \zeta (\alpha, y)^{1/2} e^{-u/32}$, this is admissible. It remains to handle the second term in the previous display. We bound this using Lemma~\ref{l: good generating function}, giving
\begin{multline*}
\E_{\chi (q)} \biggl| \sum_{\substack{n \leq y^{\mathcal{K}} \\ P(n) \leq y}} \frac{\chi(n)}{n^{\alpha / 2 + it}} - \sum_{k = 0}^{\mathcal{K}} \frac{P (\alpha / 2 + it, \chi, y)^{k}}{k!} \biggr| \\ 
\ll \zeta (\alpha, y)^{1/2} \biggl(y^{(1/2 - \alpha)/2} + \exp \biggl\{ - \frac{u}{8} \biggl( \frac{\mathcal{K}}{u} -1 \biggr)^2 \biggr\} \biggr) .
\end{multline*}
Then applying the fact that $\mathcal{K}/u = \frac{\log \min\{ q, x^{3/2} \}}{\log x} + O(1/u)$, we complete the proof of Proposition~\ref{p: L1 norm}.
\end{proof}

Proposition~\ref{p: L1 norm} gives an upper bound for the first absolute moment of a generating function of the original sum, which crucially beats the Cauchy--Schwarz bound of size $\zeta(\alpha,y)^{1/2}$. To deduce Theorem~\ref{t: main}, we can now just apply Perron's formula followed by the triangle inequality.

\begin{proof}[Proof of Theorem~\ref{t: main}, assuming Lemmas~\ref{l: good generating function} and~\ref{l: large value bound}]
Suppose that $(\log x)^4 \leq y \leq x^{\frac{1}{4 \log \log x}}$, and recall that $\mathcal{K} = \lfloor \frac{\log \min \{ x^{3/2}, q \}}{\log y} \rfloor$. We can assume that $q \geq xy$, since Theorem~\ref{t: main} is worse than the trivial Cauchy--Schwarz bound when this is not the case (see the discussion following the statement of Theorem~\ref{t: main}). This means that $x \geq y^{\mathcal{K}}$, and therefore, by Perron's formula~\cite[Corollary~5.3]{MV2007}, we have
\begin{multline*}
\sum_{\substack{n \leq x \\ P(n) \leq y}} \chi(n) = \frac{1}{2 \pi i} \int_{\alpha/2 - ix}^{\alpha/2 + ix} \sum_{\substack{n \leq y^{\mathcal{K}} \\ P(n) \leq y}} \frac{\chi(n)}{n^s} x^s \frac{ds}{s} \\  
+ O \biggl( 1 + \sum_{\substack{x/2 < n \leq 2x \\ n \neq x}} \min \biggl\{ 1, \frac{1}{|x-n|} \biggr\} + \frac{x^{\alpha/2}}{x} \sum_{n \leq \min \{x^{3/2}, q\}} \frac{1}{n^{\alpha/2}} \biggr).
\end{multline*}
The error term here is $\ll x^{1/2 - \alpha/4} \log x$, since the first sum in the error term is bounded by $\ll 1 +  \sum_{1\le i \le x}i^{-1}\ll \log x$ and the second sum is $\ll x^{1/2-\alpha/4}$. Therefore, applying the triangle inequality, we have
\[
\E_{\chi (q)} \Bigl| \sum_{\substack{n \leq x \\ P(n) \leq y}} \chi(n) \Bigr| \ll x^{\alpha / 2} \int_{-x}^{x} \E_{\chi (q)} \Bigl| \sum_{\substack{n \leq y^{\mathcal{K}} \\ P(n) \leq y}} \frac{\chi(n)}{n^{\alpha / 2 + it}} \Bigr| \frac{dt}{|\alpha / 2 + it|} + x^{1/2 - \alpha/4} \log x .
\]
Applying Proposition~\ref{p: L1 norm}, the right-hand side is
\[
\ll x^{\alpha/2}  \zeta (\alpha, y)^{1/2} (\log x) \biggl(y^{(1/2 - \alpha)/2} + \exp \biggl\{ - \frac{u}{8} \biggl( \frac{\log \min\{ q, x^{3/2} \}}{\log x} - 1 \biggr)^2 \biggr\} \biggr) + x^{1/2 - \alpha/4} \log x .
\]
We upper bound the first term using the result of~\citet[Theorem~1]{HildTen86} that $x^{\alpha} \zeta (\alpha, y) \leq \Psi(x,y) \sqrt{\log x \log y}$. Furthermore, using~\eqref{equ: saddle point est} to bound the second term and using the fact (see~\cite[Corollary to Theorem~3.1]{CEP}) that $\Psi(x,y) = x e^{-u \log u (1+o(1))}$ uniformly on our range of $y$ to lower bound the first term, one can find that the first term dominates the size of this expression. Therefore, we deduce that
\[
\E_{\chi (q)} \Bigl| \sum_{\substack{n \leq x \\ P(n) \leq y}} \chi(n) \Bigr| \ll \sqrt{\Psi(x,y) S(x,y,q)},
\]
where
\[
S(x,y,q) = (\log x)^{3/2} (\log y)^{1/2} \biggl( y^{1/2 - \alpha(x,y)} +  \exp \biggl\{ - \frac{u}{4} \biggl( \frac{\log \min\{ q, x^{3/2} \}}{\log x} - 1 \biggr)^2  \biggr\} \biggr),
\]
as required.
\end{proof}

\section{Proofs of Lemmas}\label{s:proof of lemmas}

\subsection{Useful estimates} 
In this section, we prove Lemmas~\ref{l: good generating function} and~\ref{l: large value bound}. To prove these, it will be helpful to have the following estimate: with $\alpha = \alpha (x,y)$ the saddle point, for $(\log x)^2 \leq y \leq x^{1/3}$, we have
\begin{equation}\label{equ: prime sum}
\sum_{p \leq y} \frac{1}{p^{\alpha }} \leq u + \log \log x + O \biggl( \frac{u \log \log u}{\log u} \biggr) .
\end{equation}
This follows by combining a prime number sum estimate (for example~\cite[Lemma~7.4]{MV2007}) with estimates of~\citet[Lemmas~2 and~3]{HildTen86} for the saddle point that are slightly stronger than~\eqref{equ: saddle point est}. We will also apply the following simple but useful inequality: uniformly for $(\log x)^4 \leq y \leq x$, there exists an absolute constant $c \geq 0$ such that
\begin{equation}\label{equ: u upper bound log zeta}
u \leq \log \zeta (\alpha, y) + c.
\end{equation}
This suffices for our purposes, though we note that it actually holds with $c = 0$. The proof follows from the definition of the saddle point~\eqref{equ: saddle defn},
\[
u = \frac{\log x}{\log y} = \sum_{p \leq y} \frac{\log p}{\log y} \frac{1}{p^{\alpha} - 1} = \sum_{k \geq 1} \sum_{p \leq y} \frac{\log p}{p^{k \alpha} \log y} \leq \sum_{p \leq y} \frac{1}{p^\alpha} + O(1) = \log \zeta(\alpha, y) + O(1).
\]
Here we have vitally used~\eqref{equ: saddle point est} (and the fact that the saddle point is increasing in $y$) to note that uniformly for $(\log x)^4 \leq y \leq x$, we have $\alpha(x,y) \geq 2/3$, say, so that $\sum_{p \leq y} \frac{1}{p^{2\alpha}} \ll 1$.

\subsection{Proof of Lemma~\ref{l: good generating function}}
We first give the proof of Lemma~\ref{l: good generating function}, which is arguably the most important part of the proof, since it is the limiting factor in improving our result.
\begin{proof}[Proof of Lemma~\ref{l: good generating function}]
Suppose that $(\log x)^4 \leq y \leq x$. Let $K \in \N$ be such that $x \leq y^K \leq \min \{ q, x^{3/2} \}$, and take any $t \in \R$. By Cauchy--Schwarz, it suffices to show that
\[
\biggl( \E_{\chi (q)} \biggl| \sum_{\substack{n \leq y^K \\ P(n) \leq y}} \frac{\chi(n)}{n^{\alpha / 2 + it}} - \sum_{k = 0}^{K} \frac{P (\alpha / 2 + it, \chi, y)^{k}}{k!} \biggr|^2 \biggr)^{1/2} \ll E (x, y, K),
\]
where
\[
E (x, y, K) = \zeta (\alpha(x,y), y)^{1/2} \biggl(y^{(1/2 - \alpha(x,y))/2} + \exp \biggl\{ - \frac{u}{8} \Bigl( \frac{K}{u} -1 \Bigr)^2 \biggr\} \biggr).
\]
Rewriting the sum over $k$ using~\eqref{equ: truncated expansion} and applying orthogonality of characters (crucially using the fact that $y^K \leq q$), we find that
\[
\biggl( \E_{\chi (q)} \biggl| \sum_{\substack{n \leq y^K \\ P(n) \leq y}} \frac{\chi(n)}{n^{\alpha / 2 + it}} - \sum_{k = 0}^{K} \frac{P (\alpha / 2 + it, \chi, y)^{k}}{k!} \biggr|^2 \biggr)^{1/2} \leq \biggl( \sum_{\substack{n \leq y^K \\ P(n) \leq y}} \frac{|1 - a(n)|^2}{n^{\alpha}} \biggr)^{1/2} .
\]
Now using the properties of $a(n)$ from~\eqref{equ: a(n) properties}, any integers $n$ in the sum satisfying $a(n) \neq 1$ must be either divisible by an integer of the form $p^j \geq y$, \emph{or} satisfy $\omega (n) \geq K$. When either of these conditions occur for an integer $n$, we can always apply the bound $|1 - a(n)|^2 \leq 1$, so the previous display is
\[
\leq \biggl( \sum_{j=2}^{K \log y} \sum_{p \geq y^{1/j}} \sum_{\substack{n \leq y^K \\ P(n) \leq y \\ p^j \mid n}} \frac{1}{n^{\alpha}} + \sum_{\substack{n \leq y^K \\ P(n) \leq y \\ \omega(n) \geq K}} \frac{1}{n^{\alpha}} \biggr)^{1/2}.
\]
By subadditivity of $x \rightarrow x^{1/2}$, to complete the proof, it suffices to show that
\[
\sum_{j=2}^{K \log y} \sum_{p \geq y^{1/j}} \sum_{\substack{n \leq y^K \\ P(n) \leq y \\ p^j \mid n}} \frac{1}{n^{\alpha}} \ll \zeta (\alpha, y) y^{(1/2 - \alpha)} ,
\]
and that
\begin{equation}\label{equ: divisors large deviation}
\sum_{\substack{n \leq y^K \\ P(n) \leq y \\ \omega(n) \geq K}} \frac{1}{n^{\alpha}} \ll \zeta(\alpha, y) \exp \biggl\{ - \frac{u}{4} \biggl( \frac{K}{u} -1 \biggr)^2 \biggr\} .
\end{equation}
The first statement is straightforward to prove. We have
\[
\sum_{j=2}^{K \log y} \sum_{p \geq y^{1/j}} \sum_{\substack{n \leq y^K \\ P(n) \leq y \\ p^j \mid n}} \frac{1}{n^{\alpha}} \ll \sum_{j=2}^{K \log y} \sum_{p \geq y^{1/j}} \frac{\zeta(\alpha, y)}{p^{j \alpha}} \ll \frac{\zeta(\alpha, y)}{\log y} \sum_{j = 2}^{K \log y} j y^{1/j - \alpha} \ll \zeta (\alpha, y) y^{1/2 - \alpha},
\]
as required, so we just need to prove~\eqref{equ: divisors large deviation}. First, applying Rankin's trick, we have
\[
\sum_{\substack{n \leq y^K \\ P(n) \leq y \\ \omega(n) \geq K}} \frac{1}{n^{\alpha}} \leq  e^{-cK} \sum_{\substack{n \leq y^K \\ P(n) \leq y}} \frac{e^{c \omega (n)}}{n^{\alpha}} = e^{-cK} \prod_{p \leq y} \biggl( 1 + \frac{e^c}{p^{\alpha}} + O \bigl( 1/p^{2 \alpha} \bigr) \biggr) \ll \exp \biggl\{ \sum_{p \leq y} \frac{e^c}{p^{\alpha}} - cK \biggr\},
\]
uniformly for any bounded $c>0$. Here we have used~\eqref{equ: saddle point est} to note that $\alpha \geq 2/3 $ uniformly for $y \geq (\log x)^4$, say, and hence $\sum_{p \leq y} \frac{1}{p^{2\alpha}} \ll 1$. Taking $c = \log (K / u)$ roughly minimises this error, though to be precise, the error is minimised by instead taking it to be $\log ( K / \sum_{p \leq y} \frac{1}{p^{\alpha}})$. In light of~\eqref{equ: prime sum}, this will not matter on the range of $y$ we end up considering, and an advantage of taking this value of $c$ is that it is immediately from the condition $x \leq y^K$ that $c>0$. We obtain
\[
\sum_{\substack{n \leq y^K \\ P(n) \leq y \\ \omega(n) \geq K}} \frac{1}{n^{\alpha}} \leq \exp \biggl\{ \frac{K u}{\sum_{p \leq y} \frac{1}{p^{\alpha}}} \Bigl( 1 - \log \bigl( K / u \bigr) \Bigr) \biggr\},
\]
and we proceed to bound this term more explicitly. Seeing as $K$ will be of size comparable to $u$, we define $\delta = \delta (K)$ to be such that $\delta = \frac{K}{u}-1$ (equivalently $K = (1 + \delta) u$). The condition $x \leq y^K \leq \min \{ q, x^{3/2} \}$ implies that $\delta \in [0, 1/2]$. Using the fact that $1 - \log (1+x) \leq 1 - x + x^2/2$ for $x \geq 0$, we then have
\begin{align*}
K \Bigl( 1 - \log \bigl( K / u \bigr) \Bigr) &= u (1 + \delta) \bigl( 1 - \log (1 + \delta) \bigr) \\
& \leq u (1 + \delta) (1 - \delta + \delta^2/2) \\
& = u (1 - \delta^2/2 +\delta^3/2) \\
& \leq u (1 - \delta^2/4),
\end{align*}
where, to obtain the last line, we have made use of the fact that $-x^2/2 + x^3/2 \leq -x^2/4$ for $x \in [0,1/2]$. Inserting this bound into the previous display, we have shown that
\[
\sum_{\substack{n \leq y^K \\ P(n) \leq y \\ \omega(n) \geq K}} \frac{1}{n^{\alpha}} \ll \exp \biggl\{ \frac{u^2}{\sum_{p \leq y} \frac{1}{p^{\alpha}}} \biggl( 1 - \frac{\delta^2}{4} \biggr) \biggr\}.
\]
Finally, making use of the facts that $\log \zeta (\alpha, y) = \sum_{p \leq y} \frac{1}{p^\alpha} + O(1)$ and $u \leq \log \zeta(\alpha,y) + c$ (as shown in~\eqref{equ: u upper bound log zeta}), we deduce that
\[
\sum_{\substack{n \leq y^K \\ P(n) \leq y \\ \omega(n) \geq K}} \frac{1}{n^{\alpha}} \ll \zeta (\alpha, y) \exp \bigl( -\delta^2 u / 4 \bigr).
\]
Recalling that $\delta = \frac{K}{u}-1$, this proves~\eqref{equ: divisors large deviation}, thus completing the proof of Lemma~\ref{l: good generating function}.
\end{proof}

\subsection{Proof of Lemma~\ref{l: large value bound}}
We now move on to the proof of Lemma~\ref{l: large value bound}. The proof follows quite similarly to bounds for moments of the Riemann zeta function~\cite{SoundMoments}.
\begin{proof}[Proof of Lemma~\ref{l: large value bound}]
We want to show that, uniformly for $k \leq \lfloor \frac{\log q}{\log y} \rfloor$ and for $(\log x)^4 \leq y \leq x^{\frac{1}{4 \log \log x}}$, there exists an absolute constant $c \geq 1$ so that for $V \geq c \sqrt{u}$, we have
\[
\p_{\chi(q)} \bigl( |P (\alpha / 2 + it, \chi, y) | > V \bigr) \ll \sqrt{k} \biggl( \frac{k u}{2 V^2} \biggr)^{k} ,
\]
uniformly for $t \in \R$, where we recall from~\eqref{equ: defn of P} that
\[
P (\alpha/2+it, \chi, y) = \sum_{j=1}^{\lfloor \frac{\log y}{\log 2} \rfloor} \frac{1}{j} \sum_{p \leq y^{1/j}} \frac{\chi(p)^j}{p^{j (\alpha/2+it)}}.
\]
The $j=1$ term should give the dominant contribution to the size of $P (\alpha / 2 + it, \chi, y)$, so we first remove the contribution from $j \geq 3$. Note that, by~\eqref{equ: saddle point est}, for $y \geq (\log x)^4$, we have the uniform bound $\alpha (x,y) \geq 3/4 (1+o(1))$. Hence
\[
\sum_{j=3}^{\lfloor \frac{\log y}{\log 2} \rfloor} \frac{1}{j} \sum_{p \leq y^{1/j}} \frac{\chi(p)^j}{p^{j (\alpha/2+it)}} \leq \sum_{j=3}^{\lfloor \frac{\log y}{\log 2} \rfloor} \frac{1}{j} \biggl( \frac{1}{2^{\alpha j / 2}} + \int_{2}^{\infty} \frac{d v}{v^{\alpha j / 2}} \biggr) \ll \sum_{j=3}^{\lfloor \frac{\log y}{\log 2} \rfloor} \frac{1}{2^{\alpha j / 2}} \leq A ,
\]
uniformly for $(\log x)^4 \leq y \leq x$, for some absolute constant $A > 0$. Therefore, by the union bound, we have
\begin{multline}\label{equ: union bound}
\p_{\chi(q)} \bigl( |P (\alpha / 2 + it, \chi, y) | > V \bigr) \leq \p_{\chi(q)} \biggl( \Bigl| \sum_{p \leq y} \frac{\chi (p)}{p^{\alpha / 2 + it}} \Bigr| > V - 2 U \biggr) \\ 
+ \p_{\chi(q)}\biggl( \Bigl| \sum_{p \leq \sqrt{y}} \frac{\chi(p^2)}{p^{\alpha + 2it}} \Bigr| > U \biggr),
\end{multline}
for any $U \geq A$. We begin by bounding the first probability on the right-hand side. Suppose further that $U \leq V/100$, say, then since $V - 2U \geq e^{-0.05} V$, we have
\[
\p_{\chi(q)} \biggl( \Bigl| \sum_{p \leq y} \frac{\chi (p)}{p^{\alpha / 2 + it}} \Bigr| > V - 2 U \biggr) \leq \p_{\chi(q)} \biggl( \Bigl| \sum_{p \leq y} \frac{\chi (p)}{p^{\alpha / 2 + it}} \Bigr| > e^{-0.05} V \biggr)
. \]
We now apply Markov's inequality and obtain
\[
\p_{\chi(q)} \biggl( \Bigl| \sum_{p \leq y} \frac{\chi (p)}{p^{\alpha / 2 + it}} \Bigr| > e^{-0.05} V \biggr) \leq \frac{1}{(e^{-0.05} V)^{2k}}\E_{\chi (q)} \Bigl| \sum_{p \leq y} \frac{\chi (p)}{p^{\alpha / 2 + it}} \Bigr|^{2k} .
\]
So we wish to obtain a good upper bound for 
\[
\E_{\chi (q)} \Bigl| \sum_{p \leq y} \frac{\chi (p)}{p^{\alpha / 2 + it}} \Bigr|^{2k}.
\]
These moments will be roughly Gaussian whenever $y^k \leq q$, and the handling of this term will follow identically to~\citet[Lemma~3]{SoundMoments}; we include the argument for the sake of completeness. As is done there, we begin by noting that 
\[
\biggl( \sum_{p \leq y} \frac{\chi (p)}{p^{\alpha / 2 + it}} \biggr)^{k} = \sum_{n \leq y^k} \frac{b_{k,y} \chi(n)}{n^{\alpha/2 + it}},
\]
where $b_{k,y} (n) = 0$ unless $n = p_1^{\beta_1} \dots p_r^{\beta_r}$ where $p_i \leq y$ and $\sum_{i=1}^r \beta_i = k$ (i.e.\,unless $n$ is a product of exactly $k$ primes less than or equal to $y$, counting multiplicity), in which case we have $b_{k,y} (n) = \frac{k!}{\beta_1! \dots \beta_r!} = {{k}\choose{\beta_1 , \dots, \beta_r}}$. Therefore, so long as $y^k \leq q$, by orthogonality, we have
\[
\E_{\chi (q)} \Bigl| \sum_{p \leq y} \frac{\chi (p)}{p^{\alpha / 2 + it}} \Bigr|^{2k} \leq \sum_{n \leq y^k} \frac{|b_{k,n}|^2}{n^{\alpha}} = \sum_{p_1 < \dots < p_r \leq x} \sum_{\substack{\beta_1 , \dots, \beta_r \geq 1 \\ \sum_{i} \beta_i = k}} {{k}\choose{\beta_1, \dots, \beta_r}}^2 \frac{1}{p_1^{\alpha \beta_1} \dots p_r^{\alpha \beta_r}}.
\]
Using the fact that ${{k}\choose{\beta_1 , \dots, \beta_r }} \leq k!$, this can be bounded above by
\[
k! \sum_{p_1 < \dots < p_r \leq y} \sum_{\substack{\beta_1, \dots, \beta_r \geq 1 \\ \sum_i \beta_i = k}} {{k}\choose{\beta_1, \dots, \beta_r}} \frac{1}{p_1^{\alpha \beta_1} \dots p_r^{\alpha \beta_r}} 
 = k! \biggl( \sum_{p \leq y} \frac{1}{p^\alpha} \biggr)^k .
\]
Then, using Stirling's formula to note that $k! \ll \sqrt{k} (k/e)^k$ (see~\cite[Equation~(B.26)]{MV2007}) and inserting these bounds back into the probability estimate, we have
\[
\p_{\chi(q)} \biggl( \Bigl| \sum_{p \leq y} \frac{\chi (p)}{p^{\alpha / 2 + it}} \Bigr| > V - 2 U \biggr) \ll \frac{ \sqrt{k} \bigl( \frac{k}{e} \sum_{p \leq y} \frac{1}{p^\alpha} \bigr)^k}{( e^{-0.05} V )^{2k}} = \sqrt{k} \biggl( \frac{k \sum_{p \leq y} \frac{1}{p^\alpha} }{e^{0.9} V^2} \biggr)^{k},
\]
uniformly for $A \leq U \leq V/100$. Note that, by equation~\eqref{equ: prime sum}, we can verify that for $y \leq x^{\frac{1}{4 \log \log x}}$, when $x$ is sufficiently large, we have $ \frac{1}{e^{0.9}} \sum_{p \leq y} \frac{1}{p^{\alpha}} \leq u/2 $. Combining this with the previous display gives
\[
\p_{\chi(q)} \biggl( \Bigl| \sum_{p \leq y} \frac{\chi (p)}{p^{\alpha / 2 + it}} \Bigr| > V - 2 U \biggr) \ll \sqrt{k} \biggl( \frac{ku}{2V^2} \biggr)^k .
\]
Therefore, the first term in~\eqref{equ: union bound} is consistent with the bound in the lemma. 

To complete the proof, we need to prove a similar bound for the second term in~\eqref{equ: union bound}. Specifically. we will show that for some $A \leq U \leq V/100$, we have
\[
\p_{\chi(q)}\biggl( \Bigl| \sum_{p \leq \sqrt{y}} \frac{\chi(p^2)}{p^{\alpha + 2it}} \Bigr| > U \biggr) \ll \sqrt{k} \biggl( \frac{k u}{e V^2} \biggr)^k .
\]
We begin similarly. In this case, we have
\[
\biggl( \sum_{p \leq \sqrt{y}} \frac{\chi(p)^2}{p^{\alpha + 2it}} \biggr)^k = \sum_{n \leq y^{k/2}} \frac{b_{k, \sqrt{y}} (n)}{n^{\alpha + 2it}},
\]
so under the assumption $y^k \leq q$, by a similar argument, we have
\[
\E_{\chi(q)} \Bigl| \sum_{p \leq \sqrt{y}} \frac{\chi(p^2)}{p^{\alpha + 2it}} \Bigr|^{2k} \leq \sum_{n \leq y^{k/2}} \frac{|b_{k, \sqrt{y}} (n)|^2}{n^{2\alpha}} \leq k! \biggl( \sum_{p \leq \sqrt{y}} \frac{1}{p^{2 \alpha}} \biggr)^k \ll B^k k! ,
\]
for some absolute constant $B>0$, where we have again used the fact that $\alpha (x,y) \geq 3/4 (1+o(1))$. By Markov's inequality and Stirling's formula, we have
\[
\p_{\chi(q)}\biggl( \Bigl| \sum_{p \leq \sqrt{y}} \frac{\chi(p^2)}{p^{\alpha + 2it}} \Bigr| > U \biggr) \ll \frac{B^k k!}{U^{2k}} \ll \sqrt{k} \biggl( \frac{k B}{e U^2} \biggr)^{k} .
\]
The choice $U = V \sqrt{B/u}$ delivers the desired bound, which is admissible so long as $A / V \leq \sqrt{B/u} \leq 1/100$. These are satisfied when $x$ is large so long as $V \geq c \sqrt{u}$ for some absolute constant $c \geq 1$ depending only on $A$ and $B$, which gives the constant seen in Lemma~\ref{l: large value bound}.
\end{proof}

\section{Generalisations}\label{s: other families}

\subsection{Proofs of Theorems~\ref{t: mult twists} and~\ref{t: other families}}
The proof of Theorem~\ref{t: main} is amenable to generalisation in a straightforward manner. Suppose that $N \geq 1$ is some parameter (representing the size of a family) and that $\mathcal{F}_N$ is a finite or infinite collection of completely multiplicative functions $\psi \colon \N \rightarrow \C$ that satisfy $|\psi(n)| \leq 1$ for all $n \in \N$. Suppose further that we can put a probability measure $\p_N$ on $\mathcal{F}_N$ such that, for any complex coefficients $(c(n))_{n \in \N}$, the induced expectation $\E_N$ satisfies
\begin{equation}\label{equ: generalisation cond}
\E_{N} \Bigl| \sum_{n \leq X} c(n) \psi(n) \Bigr|^2 \ll \sum_{n \leq X} |c(n)|^2 ,
\end{equation}
whenever $X \leq N$, where the implied constant is independent of the size of the collection, $N$. We then claim that for $2 \leq x \leq N$ and $(\log x)^6 \leq y \leq x^{\frac{1}{32 \log \log x}}$, we have
\begin{equation}\label{eqn: claim}
\E_{N} \Bigl| \sum_{\substack{n \leq x \\ P(n) \leq y}} \psi (n) \Bigr| \ll \sqrt{\Psi(x,y)S(x,y,N)}, 
\end{equation}
as in Theorem~\ref{t: main}. We will now show how this claim implies Theorems~\ref{t: mult twists} and~\ref{t: other families}, for which we need to show that the corresponding families satisfy~\eqref{equ: generalisation cond}. Take any completely multiplicative function $h \colon \N \rightarrow \C$ satisfying $|h(p)| \leq 1$ on all primes $p$, and consider the infinite collection\footnote{We use the variable $v$ as opposed to $t$ here, since $t$ appears throughout the paper as the imaginary component that comes from Perron's formula, as in~\eqref{equ: Lemma 1 for multiplicative functions}.} of completely multiplicative functions $\mathcal{F}_{T} = \{ n \rightarrow h(n) n^{iv}: v \in [T,2T]\}$. We associate with $\mathcal{F}_T$ the probability measure induced by pulling back to $v \in [T,2T]$ and sampling $v$ uniformly at random, and we call the expectation $\E_T$. For $X \leq T$, and for arbitrary complex coefficients $(c(n))_{n \in \N}$, the classical mean value theorem~\cite[Theorem~9.1]{IK} then tells us that
\[
\E_T \Bigl| \sum_{n \leq X} c(n) h(n) n^{iv} \Bigr|^2 =  \frac{1}{T} \int_{T}^{2T} \Bigl| \sum_{n \leq X} c(n) h(n) n^{iv} \Bigr|^2 \, dv \ll \sum_{n \leq X} |c(n) h(n)|^2 \ll \sum_{n \leq X} |c(n)|^2 ,
\]
whenever $X \leq T$, hence~\eqref{equ: generalisation cond} holds in this case. Again fixing $h$, we can also consider $\mathcal{F}_{Q^2} = \{ n \rightarrow h(n) \chi(n): \chi \text{ is primitive} \bmod q \text{ for some } q \leq Q \}$ with the uniform (discrete) probability measure. Note that this set of functions has $\asymp Q^2$ many elements. We call the corresponding expectation $\E_{Q^2}$. Similarly to above, for any $X \leq Q^2$, by the multiplicative large sieve inequality~\cite[Theorem~7.13]{IK}, we have 
\[
\E_{Q^2} \Bigl| \sum_{n \leq X} c(n) h(n) \chi(n) \Bigr|^2 \ll \frac{1}{Q^2} \sum_{q \leq Q} \, \, \sideset{}{^*} \sum_{\chi \bmod q} \Bigl| \sum_{n \leq X} c(n) h(n) \chi(n) \Bigr|^2 \ll \sum_{n \leq X} |c(n) h(n)|^2 ,
\]
which is $\ll \sum_{n \leq X} |c(n)|^2$, where $\sum\nolimits^{*}_{\chi \bmod q}$ denotes the sum over primitive Dirichlet characters modulo $q$. Therefore~\eqref{equ: generalisation cond} is also satisfied (with $N = Q^2$) in this case.

The proof of the claim \eqref{eqn: claim} follows by replacing occurrences of $\chi (n)$ with $\psi (n)$ and performing analogous calculations: we will briefly discuss how one proves analogues of Lemmas~\ref{l: good generating function} and~\ref{l: large value bound}, the main result then follows by exchanging occurrences of $\chi$ with $\psi$ in Section~\ref{s: Proof of main theorem}. First, as in~\eqref{equ: defn of P}, we consider
\[
P (s, \psi, y) = \sum_{j = 1}^{\lfloor \frac{\log y}{\log 2} \rfloor} \frac{1}{j} \sum_{p \leq y^{1/j}} \frac{\psi(p)^j}{p^{js}}.
\]
To prove an analogous result to Lemma~\ref{l: good generating function}, we need to bound
\begin{equation}\label{equ: Lemma 1 for multiplicative functions}
\E_{N} \Bigl| \sum_{\substack{n \leq y^K \\ P(n) \leq y}} \frac{\psi(n)}{n^{\alpha / 2 + it}} - \sum_{k = 0}^{K} \frac{P (\alpha / 2 + it, \psi, y)^{k}}{k!} \Bigr|.
\end{equation}
Mirroring~\eqref{equ: truncated expansion}, for any $K \in \N$, we have
\[
\sum_{k=0}^{K} \frac{P (\alpha/2 + it, \psi, y)^k}{k!} = \sum_{k=0}^{K} \frac{1}{k!} \biggl( \sum_{j = 1}^{\lfloor \frac{\log y}{\log 2} \rfloor} \frac{1}{j} \sum_{p \leq y^{1/j}} \frac{\psi (p)^j}{p^{j(\alpha / 2 + it)}} \biggr)^k = \sum_{\substack{n \leq y^K \\ P(n) \leq y}} \frac{a(n) \psi (n)}{n^{\alpha/2+it}},
\]
and as before, the coefficients $a(n)$ satisfy the conditions from~\eqref{equ: a(n) properties}, specifically
\[
a(n) = 1 \text{ if } p^j \mid n \Rightarrow p \leq y^{1/j} \text{ \emph{and} } \omega(n) \leq K. \text{ Otherwise, } 0 \leq a(n) \leq 1.
\]
In fact, these coefficients are exactly the same as those appearing in~\eqref{equ: truncated expansion}. Therefore, by considering the difference in~\eqref{equ: Lemma 1 for multiplicative functions}, applying Cauchy--Schwarz followed by~\eqref{equ: generalisation cond}, we see that we can obtain the same bound as in Lemma~\ref{l: good generating function} for this difference. The proof of Lemma~\ref{l: large value bound} with $P (\alpha/2+it, \chi, y)$ replaced by $P (\alpha/2+it, \psi, y)$ also follows identically, the only inputs being orthogonality and the fact that $|\psi (n)| \leq 1$ for all $n \in \N$.

\subsubsection*{Rights Retention}
For the purpose of open access, the authors have applied a Creative Commons Attribution (CC-BY) licence to any Author Accepted Manuscript version arising from this submission.

\printbibliography
\end{document}